\newtheorem{thm}{Th\'eor\`eme}[section]
\newtheorem{prop}[thm]{Proposition}
\newtheorem{lemma}[thm]{Lemme}
\newtheorem{corol}[thm]{Corollaire}
\newtheorem{csq}[thm]{Cons\'equence}
\theoremstyle{definition}
\newtheorem{defi}[thm]{D\'efinition}
\newtheorem{nota}[thm]{Notation}
\newtheorem{rem}[thm]{Remarque}
\theoremstyle{remark}
\newcommand{\pic}{\mathrm{Pic}}
\newcommand{\br}{\mathrm{Br}}
\newcommand{\h}{\mathrm{H}}
\newcommand{\spec}{\mathrm{Spec}}
\begin{document}

\begin{center}
{\huge \bfseries Faisceaux $\mathbb{Q}/\mathbb{Z}(j)$ et \\ conjecture de Gersten \\ sur un corps imparfait}
\vspace{\baselineskip}

Alexandre \bsc{Lourdeaux}
\end{center}

\vspace{4cm}

On revoit explicitement la construction ainsi que certaines propri\'et\'es des complexes de faisceaux $\mathbb{Q}/\mathbb{Z}(j)$ sur certains sch\'emas. Plus pr\'ecis\'ement on consid\`ere les $k$-vari\'et\'es lisses et leurs anneaux locaux. Le but est d'avoir une r\'ef\'erence précise pour la conjecture de Gersten pour les faisceaux $\mathbb{Q}/\mathbb{Z}(j)$ sur un corps imparfait ainsi que pour la comparaison entre les groupes de cohomologie $\h^2(\ast,\mathbb{Q}/\mathbb{Z}(1))$ et $\h^2(\ast,\mathbb{G}_m)$.

On ne fait que pr\'esenter en d\'etail des choses connues des sp\'ecialistes.

\vspace{\baselineskip}

On note $p$ la caract\'eristique du corps de base $k$ et on la suppose non nulle. Pour un corps $K$, une \emph{$K$-vari\'et\'e} est un $K$-sch\'ema s\'epar\'e de type fini sur $K$.

	\section{Les "faisceaux" \'etales $\mathbb{Q}/\mathbb{Z}(j)$}
	\label{section faisceaux Q/Z}

	\subsection{Complexes de de\thinspace Rham-Witt logarithmiques}

\paragraph{Pro-complexe de de\thinspace Rham-Witt.}
Dans \cite{Illusie}, Illusie construit le \emph{pro-complexe de de\thinspace Rham-Witt} pour tout topos annel\'e en $\mathbb{F}_p$-alg\`ebres. Les topos qui nous int\'eressent seront les petits et grands sites \'etales de $k$-sch\'emas $X$ munis des faisceaux $O_X=\mathbb{G}_{a,X}$. On note $X_{\text{\'Et}}$ le grand site \'etale du sch\'ema $X$ et $X_{\text{\'et}}$ son petit site \'etale.

Pour synth\'etiser, un \emph{V-pro-complexe de de\thinspace Rham-Witt} $M_\bullet^\bullet$ sur un site $C$ correspond \`a la donn\'ee \begin{itemize}
\item d'une famille d'alg\`ebres diff\'erentielles gradu\'ees (strictement anticommutatives) $M^\bullet_n$ sur $C$ pour $n \in \mathbb{Z}$, avec \begin{itemize}
\item $M_n^\bullet=0$ pour tout $n \leqslant 0$,
\item $M_1^0$ une $\mathbb{F}_p$-alg\`ebre,
\item et $\forall n \geqslant 1, \,  M^0_n=\mathrm{W}_n(M^0_1)$ (l'anneau des $n$-vecteurs de Witt de $M_1^0$);
\end{itemize}
\item de morphismes d'alg\`ebres diff\'erentielles gradu\'ees $\mathrm{R}=\mathrm{R}_n^\bullet : M^\bullet_{n+1} \to M^\bullet_n$;
\item d'applications additives $\mathrm{V}=\mathrm{V}_{n}^i : M^i_n \to M^i_{n+1}$;
\end{itemize} ces donn\'ees v\'erifiant certains axiomes (voir \cite[Déf. I.1.1 p. 543]{Illusie}).

Il est possible d'\'etendre naturellement toute $\mathbb{F}_p$-alg\`ebre $A$ sur un site $C$ en un V-prop-complexe de De\thinspace Rham-Witt :
\begin{thm}[{\cite[Th. I.1.3, page 544]{Illusie}}]
\label{thm de rham witt}
Soit $C$ un site. Le foncteur de la cat\'egorie des V-pro-complexes de de\thinspace Rham-Witt sur $C$ dans la cat\'egorie des $\mathbb{F}_p$-alg\`ebres sur $C$, qui \`a $M_\bullet^\bullet$ associe $M^0_1$, admet un adjoint \`a gauche $A \mapsto \mathrm{W}_\bullet \Omega_{(C,A)}^\bullet$.

De plus, pour toute $\mathbb{F}_p$-alg\`ebre $A$ sur $C$, on a un isomorphisme canonique \[ \Omega_A^\bullet \overset{\sim}{\to}\mathrm{W}_1 \Omega_{(C,A)}^\bullet , \] l'alg\`ebre des diff\'erentielles de $A$ relativement au faisceau constant $\mathbb{F}_p$.
\end{thm}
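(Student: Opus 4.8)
Le plan est de suivre la construction explicite d'Illusie : on fabrique à la main un système compatible de quotients de complexes de de Rham absolus de faisceaux de vecteurs de Witt, on en vérifie la propriété universelle, puis on calcule le niveau $n=1$. Concrètement, pour chaque $n\geqslant 1$, on dispose du faisceau d'anneaux $\mathrm{W}_n(A)$ sur $C$ et de son complexe de de Rham absolu $\Omega^\bullet_{\mathrm{W}_n(A)}=\Lambda^\bullet_{\mathrm{W}_n(A)}\Omega^1_{\mathrm{W}_n(A)/\mathbb{Z}}$, qui est déjà une algèbre différentielle graduée strictement anticommutative (dans une algèbre extérieure $\omega\wedge\omega=0$ pour $\omega$ de degré impair, même en caractéristique $2$), et les restrictions $\mathrm{R}:\mathrm{W}_{n+1}(A)\to\mathrm{W}_n(A)$ induisent des morphismes d'algèbres différentielles graduées $\Omega^\bullet_{\mathrm{W}_{n+1}(A)}\to\Omega^\bullet_{\mathrm{W}_n(A)}$. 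On pose alors $\mathrm{W}_n\Omega^\bullet_{(C,A)}:=\Omega^\bullet_{\mathrm{W}_n(A)}/\mathcal{N}^\bullet_n$, où $\mathcal{N}^\bullet_n$ est le plus petit idéal différentiel gradué, compatible aux flèches $\mathrm{R}$, tel que le Verschiebung $\mathrm{V}:\mathrm{W}_n(A)\to\mathrm{W}_{n+1}(A)$ se prolonge en des applications additives $\mathrm{V}:\mathrm{W}_n\Omega^q\to\mathrm{W}_{n+1}\Omega^q$ rendant vrais les axiomes d'un V-pro-complexe (compatibilité $\mathrm{RV}=\mathrm{VR}$, relations reliant $d$, $\mathrm{V}$ et le produit, la relation $x\cdot\mathrm{V}\omega=\mathrm{V}(\mathrm{F}x\cdot\omega)$ avec $\mathrm{F}$ le Frobenius de Witt, etc.) ; on munit $\mathrm{W}_\bullet\Omega^\bullet$ du produit, de $d$, de $\mathrm{R}$ induits et du $\mathrm{V}$ ainsi construit.

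Pour la propriété universelle, soient $M^\bullet_\bullet$ un V-pro-complexe sur $C$ et $\varphi:A\to M^0_1$ un morphisme de $\mathbb{F}_p$-algèbres. La fonctorialité des vecteurs de Witt fournit des morphismes d'anneaux $\mathrm{W}_n(\varphi):\mathrm{W}_n(A)\to\mathrm{W}_n(M^0_1)=M^0_n$, et la propriété universelle des différentielles de Kähler (valable pour toute algèbre différentielle graduée strictement anticommutative au but, en particulier pour $M^\bullet_n$) les prolonge en d'uniques morphismes d'algèbres différentielles graduées $\Omega^\bullet_{\mathrm{W}_n(A)}\to M^\bullet_n$ ; la compatibilité de $\varphi$ à $\mathrm{R}$ entraîne que ces morphismes commutent à $\mathrm{R}$. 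Les relations définissant $\mathcal{N}^\bullet_n$ étant satisfaites dans $M^\bullet_\bullet$ en vertu des axiomes, ces morphismes s'annulent sur $\mathcal{N}^\bullet_n$, donc se factorisent en $\mathrm{W}_n\Omega^\bullet\to M^\bullet_n$, et l'on vérifie qu'ils commutent aussi à $\mathrm{V}$ : on obtient ainsi un morphisme de V-pro-complexes $\mathrm{W}_\bullet\Omega^\bullet_{(C,A)}\to M^\bullet_\bullet$ prolongeant $\varphi$. L'unicité vient de ce que $\Omega^\bullet_{\mathrm{W}_n(A)}$ est engendrée comme algèbre différentielle graduée par son terme de degré $0$, à savoir $\mathrm{W}_n(A)$ : un morphisme issu du quotient est déterminé en degré $0$, où il doit coïncider avec $\mathrm{W}_n(\varphi)$. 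Ceci fournit l'adjonction annoncée.

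Pour le niveau $n=1$ : on a $\mathrm{W}_1(A)=A$, et comme $A$ est une $\mathbb{F}_p$-algèbre, $\Omega^\bullet_{A/\mathbb{Z}}=\Omega^\bullet_{A/\mathbb{F}_p}$ (les seules relations supplémentaires, les $d\lambda$ pour $\lambda\in\mathbb{F}_p$, sont nulles). Il suffit alors de voir que $\mathcal{N}^\bullet_1=0$ : les générateurs de $\mathcal{N}^\bullet_n$ font intervenir le Verschiebung issu du niveau $n-1$, or pour $n=1$ l'application $\mathrm{V}:\mathrm{W}_0(A)=0\to\mathrm{W}_1(A)$ est nulle, de sorte qu'aucune relation n'est imposée au bas de la tour. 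On en déduit $\mathrm{W}_1\Omega^\bullet_{(C,A)}=\Omega^\bullet_{A/\mathbb{F}_p}$, l'isomorphisme étant le morphisme canonique déduit de $\mathrm{W}_1(A)=A$.

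Le point le plus délicat est la construction du Verschiebung sur le quotient : déterminer précisément $\mathcal{N}^\bullet_n$ et montrer que $\mathrm{V}$ y descend de façon compatible à $\mathrm{R}$ et à la liste complète des axiomes d'un V-pro-complexe, sans pour autant effondrer le terme de degré $0$. On peut d'ailleurs obtenir l'existence de l'adjoint par un argument abstrait — la catégorie des V-pro-complexes sur $C$ est localement présentable (c'est la catégorie des modèles d'une théorie essentiellement algébrique, la contrainte $M^0_n=\mathrm{W}_n(M^0_1)$ se présentant comme un produit itéré muni de la structure d'anneau de Witt), et le foncteur d'oubli $M^\bullet_\bullet\mapsto M^0_1$ commute aux limites et aux colimites filtrantes, si bien qu'il admet un adjoint à gauche — mais cette voie ne donne aucune prise sur le calcul du niveau $1$, pour lequel le modèle explicite ci-dessus semble incontournable.
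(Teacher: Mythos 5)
Le texte que vous commentez ne démontre pas cet énoncé : c'est le théorème I.1.3 d'Illusie, cité comme une boîte noire, et aucune preuve n'en est donnée dans l'article. Il n'y a donc pas de démonstration interne à laquelle comparer la vôtre ; la référence pertinente est la construction originale d'Illusie, dont votre esquisse reproduit fidèlement l'architecture : réalisation de $\mathrm{W}_n\Omega^\bullet_{(C,A)}$ comme quotient du complexe de de\thinspace Rham absolu de $\mathrm{W}_n(A)$, propriété universelle déduite de celle des différentielles de Kähler appliquée aux $\mathrm{W}_n(\varphi)$, unicité par engendrement en degré $0$, et calcul du niveau $1$ via $\mathrm{W}_1(A)=A$, l'égalité $\Omega^\bullet_{A/\mathbb{Z}}=\Omega^\bullet_{A/\mathbb{F}_p}$ et la trivialité des relations de Verschiebung au bas de la tour. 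Ces trois derniers points sont corrects et essentiellement complets.

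En revanche, le c\oe ur de l'argument --- l'existence du système d'idéaux $\mathcal{N}^\bullet_n$ et la descente de $\mathrm{V}$ --- reste à l'état d'annonce, et votre formulation (le plus petit idéal tel que $\mathrm{V}$ se prolonge) est légèrement circulaire telle quelle : $\mathrm{V}$ au niveau $n$ prend ses valeurs au niveau $n+1$, de sorte que la définition de $\mathcal{N}^\bullet_{n+1}$ présuppose la construction achevée de $\mathrm{W}_n\Omega^\bullet$ avec son $\mathrm{V}$. Il faut donc une récurrence sur $n$ (c'est ce que fait Illusie), dans laquelle on vérifie à chaque étape que $\mathrm{V}$, imposé sur les générateurs $x_0\,dx_1\cdots dx_i$ par l'axiome $\mathrm{V}(\omega\,d\eta)=(\mathrm{V}\omega)\,d\mathrm{V}\eta$, respecte toutes les relations (additivité, Leibniz, et celles provenant de $\mathcal{N}^\bullet_n$), et que l'idéal obtenu est nul en degré $0$, sans quoi la contrainte $M^0_n=\mathrm{W}_n(M^0_1)$ serait violée. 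Vous signalez vous-même cette difficulté, ce qui est lucide, mais en l'état la proposition est un plan de preuve correct plutôt qu'une preuve. L'argument abstrait par présentabilité locale est une alternative recevable pour l'existence de l'adjoint (il faudrait tout de même justifier l'accessibilité du foncteur d'oubli au niveau des faisceaux), mais, comme vous le notez, il ne fournit ni le modèle explicite ni l'identification $\Omega^\bullet_A\overset{\sim}{\to}\mathrm{W}_1\Omega^\bullet_{(C,A)}$, qui constitue la seconde moitié de l'énoncé et dont dépendent la proposition \ref{suite dlog} et tout le reste de l'article.
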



\begin{defi} \begin{itemize}
\item Pour $A$ une $\mathbb{F}_p$-alg\`ebre sur le site $C$, on appelle \emph{pro-complexe de De\thinspace Rham-Witt de $A$ (sur $C$)} le V-pro-complexe de De\thinspace Rham Witt $\mathrm{W}_\bullet \Omega_{(C,A)}^\bullet$.
\item Soit $X$ un $\mathbb{F}_p$-sch\'ema. Le \emph{pro-complexe de De\thinspace Rham-Witt de $\mathcal{F}$ (sur $X$)} d'un faisceau de $\mathbb{F}_p$-alg\`ebres $\mathcal{F}$ sur $X_{\text{\'Et}}$ (resp. sur $X_{\text{\'et}}$) est le pro-complexe de De\thinspace Rham-Witt $\mathrm{W}_\bullet \Omega_{(X_{\text{\'Et}},\mathcal{F})}^\bullet$ (resp. $\mathrm{W}_\bullet \Omega_{(X_{\text{\'et}},\mathcal{F})}^\bullet$).

Le \emph{grand pro-complexe de De\thinspace Rham-Witt de $X$} est le pro-complexe de De\thinspace Rham-Witt $\mathrm{W}_\bullet \Omega_{(X_{\text{\'Et}},O_X)}^\bullet$, not\'e $\mathrm{W}_\bullet \Omega_{X_{\text{\'Et}}}^\bullet$. Le \emph{(petit) pro-complexe de De\thinspace Rham-Witt de $X$} est le pro-complexe de De\thinspace Rham-Witt $\mathrm{W}_\bullet \Omega_{(X_{\text{\'et}},O_X)}^\bullet$, not\'e $\mathrm{W}_\bullet \Omega_{X_{\text{\'et}}}^\bullet$.
\end{itemize}
\end{defi}

Sur un site $C$, un morphisme de $\mathbb{F}_p$-alg\`ebres $u : A \to B$ induit un morphisme de pro-complexes de De\thinspace Rham-Witt \[ \mathrm{W}_\bullet \Omega_u^\bullet : \mathrm{W}_\bullet \Omega_{(C,A)}^\bullet \to \mathrm{W}_\bullet \Omega_{(C,B)}^\bullet .\] Et si $f : D \to C$ est un morphisme de sites\footnote{pour rappel, $f$ est donn\'e par un foncteur $C \to D$} et si $A$ est un faisceau sur $C$ et $B$ un faisceau sur $D$, alors $f_\ast \mathrm{W}_\bullet \Omega_{(D,B)}^\bullet$ et $f^{-1} \mathrm{W}_\bullet \Omega_{(C,A)}^\bullet$ sont des V-pro-complexes de De\thinspace Rham-Witt (sur $C$ et $D$ respectivement) et on a des morphismes canoniques \begin{gather*}
\mathrm{W}_\bullet \Omega_{(C,f_\ast B)}^\bullet  \to f_\ast \mathrm{W}_\bullet \Omega_{(D,B)}^\bullet , \\
f^{-1} \mathrm{W}_\bullet \Omega_{(C,A)}^\bullet \to \mathrm{W}_\bullet \Omega_{(D,f^{-1} A)}^\bullet .
\end{gather*}

\begin{rem}
Soit $C$ un site et soit $c_0$ un objet de $C$. On consid\`ere le site $C^\prime$ de cat\'egorie sous-jacente $C/c_0$ des objets au-dessus de $c_0$ et dont la topologie est issue de $C$. On a un foncteur continu \[ \left\lbrace \begin{array}{ccc} 
C^\prime & \to & C \\
(c \to c_0) & \mapsto & c
\end{array} \right.\] Alors pour toute $\mathbb{F}_p$-alg\`ebre $A$ sur $C$, en appelant $A^\prime$ sa restriction $A \circ u$, on a une identification $\left( \mathrm{W}_\bullet \Omega_{(C,A)}^\bullet \right)_{|C^\prime} = \mathrm{W}_\bullet \Omega_{(C^\prime,A^\prime)}^\bullet$.

En particulier, si $X $ est un $\mathbb{F}_p$-sch\'emas, la restriction de $\mathrm{W}_\bullet \Omega_{(X_{\text{\'Et}},O_X)}^\bullet$ au petit site \'etale $X_{\text{\'et}}$ de $X$ est $\mathrm{W}_\bullet \Omega_{(X_{\text{\'et}},O_X)}^\bullet$. Et si $X \to Y$ est un morphisme de $\mathbb{F}_p$-sch\'emas, on a $\left( \mathrm{W}_\bullet \Omega_{(Y_{\text{\'Et}},O_Y)}^\bullet \right)_{|X_{\text{\'Et}}} = \mathrm{W}_\bullet \Omega_{(X_{\text{\'Et}},O_X)}^\bullet$.
\end{rem}

\begin{prop}
\label{propriétés RW}
\begin{enumerate}
\item (\cite[\S I.1.10]{Illusie}) \label{limite inductive RW} Soit $C$ un site et $A$ une $\mathbb{F}_p$-alg\`ebre sur $C$ d\'efinie comme limite \emph{inductive filtrante} de $\mathbb{F}_p$-alg\`ebres $A_i$. Alors $\underset{\longrightarrow}{\mathrm{lim}} \, \mathrm{W}_\bullet \Omega_{A_i}^\bullet$ existe dans la cat\'egorie des V-pro-complexes de De\thinspace Rham-Witt et le morphisme canonique \[ \underset{\longrightarrow}{\mathrm{lim}} \, \mathrm{W}_\bullet \Omega_{A_i}^\bullet \to \mathrm{W}_\bullet \Omega_{A}^\bullet \] (induit par les $A_i \to \underset{\longrightarrow}{\mathrm{lim}} A_i=A$) est un isomorphisme.
\item (\cite[\S I.1.12]{Illusie}) \label{image inverse RW} Si $f : D \to C$ est un morphisme de sites, et si $A$ est une $\mathbb{F}_p$-alg\`ebre sur $C$, alors le morphisme canonique \[ f^{-1} \mathrm{W}_\bullet \Omega_{(C,A)}^\bullet \to \mathrm{W}_\bullet \Omega_{(D,f^{-1} A)}^\bullet \] est un isomorphisme.
\end{enumerate}
\end{prop}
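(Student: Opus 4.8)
Both assertions are formal consequences of the adjunction of Theorem~\ref{thm de rham witt}: the functor $A \mapsto \mathrm{W}_\bullet \Omega_{(C,A)}^\bullet$ is a left adjoint, hence preserves every colimit that exists in its source, and it commutes with the two operations in play (filtered colimits of algebras, inverse image along a morphism of sites) precisely because those operations are exact and compatible with the Witt vector functors $\mathrm{W}_n$. I would treat the two items in parallel.

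For~\ref{limite inductive RW}, the hypothesis says exactly that $A = \varinjlim_i A_i$ in the category of sheaves of $\mathbb{F}_p$-algebras on $C$. Since a left adjoint preserves colimits that exist in its domain, $\mathrm{W}_\bullet \Omega_A^\bullet$ together with the morphisms $\mathrm{W}_\bullet \Omega_{A_i}^\bullet \to \mathrm{W}_\bullet \Omega_A^\bullet$ induced by $A_i \to A$ \emph{is} a colimit of $(\mathrm{W}_\bullet \Omega_{A_i}^\bullet)_i$ in the category of V-pro-complexes; this proves at once that the colimit exists and that the canonical comparison morphism is an isomorphism. Concretely, for any V-pro-complex $M$ one has the natural chain of bijections
\[ \mathrm{Hom}(\mathrm{W}_\bullet \Omega_A^\bullet, M) = \mathrm{Hom}_{\mathbb{F}_p\text{-}\mathrm{alg}}(A, M_1^0) = \varprojlim_i \mathrm{Hom}_{\mathbb{F}_p\text{-}\mathrm{alg}}(A_i, M_1^0) = \varprojlim_i \mathrm{Hom}(\mathrm{W}_\bullet \Omega_{A_i}^\bullet, M). \]
I would then add the refinement — this is where the \emph{filtered} hypothesis enters — that the colimit is moreover computed level by level, $\mathrm{W}_n \Omega_A^q = \varinjlim_i \mathrm{W}_n \Omega_{A_i}^q$: filtered colimits of sheaves are exact and commute with finite products and with tensor products, hence with $\mathrm{W}_n(-)$, with the absolute differentials $\Omega_{\mathrm{W}_n(-)}^\bullet$ and their exterior powers, and with the quotient by the differential graded ideal through which $\mathrm{W}_n\Omega^\bullet$ is built in \cite{Illusie}; in particular the term-wise colimit, a priori only a sheaf-level object, inherits the (equational) V-pro-complex axioms and is identified with $\mathrm{W}_\bullet\Omega_A^\bullet$.

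For~\ref{image inverse RW}, write $(f^{-1},f_\ast)$ for the associated morphism of topoi, so that $f^{-1}$ is exact, symmetric monoidal, and commutes with each $\mathrm{W}_n$ (built from finite products and polynomial operations). The plan is to lift the adjunction $f^{-1}\dashv f_\ast$ to the categories of V-pro-complexes: a morphism $f^{-1}N \to M$ of V-pro-complexes on $D$ is a family of maps $f^{-1}N_n^q \to M_n^q$ compatible with $\mathrm{R}$, $\mathrm{V}$, $\mathrm{F}$, $d$, the products and the identifications $N_n^0 = \mathrm{W}_n(N_1^0)$; transposing each component across $f^{-1}\dashv f_\ast$, and using that the structure maps of $f^{-1}N$ and of $f_\ast M$ are obtained by applying $f^{-1}$, resp.\ $f_\ast$, to those of $N$, resp.\ $M$ (this is exactly why $f^{-1}N$ and $f_\ast M$ are V-pro-complexes, as already recorded in the excerpt), one gets a natural bijection with the set of morphisms $N \to f_\ast M$ of V-pro-complexes on $C$. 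Thus $N \mapsto f^{-1}N$ is left adjoint to $M \mapsto f_\ast M$. Then the two functors
\[ f^{-1}\circ\mathrm{W}_\bullet\Omega_{(C,-)}^\bullet \qquad\text{and}\qquad \mathrm{W}_\bullet\Omega_{(D,-)}^\bullet\circ f^{-1} \]
are both composites of left adjoints and both have right adjoint $M \mapsto f_\ast(M_1^0)$; hence they are canonically isomorphic, and unwinding the isomorphism identifies it with the canonical morphism of the statement, which is therefore invertible.

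The only genuine work — and what I expect to be the sole obstacle — is bookkeeping rather than anything conceptual: in~\ref{limite inductive RW}, checking that each equational axiom of a V-pro-complex passes to filtered colimits (so that the term-wise colimit really is one); and in~\ref{image inverse RW}, checking that the $\mathrm{R}$, $\mathrm{V}$, $\mathrm{F}$, $d$, products and Witt identifications on $f^{-1}N$ are literally the images under $f^{-1}$ of those on $N$, since this is what makes the transposition across $f^{-1}\dashv f_\ast$ respect all the structure. Item~\ref{image inverse RW} can alternatively be proved entirely by hand: $f^{-1}$, being exact, monoidal and compatible with the $\mathrm{W}_n$, commutes term by term with Illusie's explicit presentation of $\mathrm{W}_n\Omega^\bullet$ as a quotient of $\Omega_{\mathrm{W}_n(-)}^\bullet$.
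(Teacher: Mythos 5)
Your argument is correct, but note that the paper itself gives no proof of this proposition: both items are quoted directly from Illusie (\S I.1.10 and \S I.1.12 of \emph{op.\ cit.}), so there is nothing internal to compare against. What you have written is essentially a reconstruction of Illusie's own arguments, and it is sound: the purely formal part (a left adjoint preserves whatever colimits exist in its source; two left adjoints of the same functor are canonically isomorphic, applied to the common right adjoint $M \mapsto f_\ast(M_1^0)$) is exactly the mechanism at work in both items. You are also right to single out where the actual content lies. For item~\ref{limite inductive RW}, the formal argument alone makes the statement nearly tautological (it exhibits $\mathrm{W}_\bullet\Omega_A^\bullet$ as \emph{a} colimit); what the paper actually uses downstream (in Remark~\ref{lemme limite RW} and in the proof of Proposition~\ref{suite dlog}, where exactness of sequences of sheaves is checked through filtered colimits) is the term-wise computation $\mathrm{W}_n\Omega_A^q = \varinjlim_i \mathrm{W}_n\Omega_{A_i}^q$, and this is precisely where the filtered hypothesis and the inductive construction of $\mathrm{W}_n\Omega^\bullet$ as a quotient of $\Omega^\bullet_{\mathrm{W}_n(-)}$ by generated differential graded ideals must be invoked; your sketch of why each step of that construction commutes with filtered colimits is the right one, though it remains a sketch. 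For item~\ref{image inverse RW} the one point worth making fully explicit is the identification $f^{-1}\mathrm{W}_n(B) = \mathrm{W}_n(f^{-1}B)$ (exactness of $f^{-1}$ plus the fact that $\mathrm{W}_n$ is a finite product with polynomial structure maps), which is what makes $f^{-1}N$ a V-pro-complex at all and lets the adjunction transpose the $\mathrm{R}$, $\mathrm{V}$, $d$ and product data; you flag this correctly.
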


\begin{rem}
\label{lemme limite RW}
Soit $X$ un $\mathbb{F}_p$-sch\'ema qui est une limite projective de $\mathbb{F}_p$-sch\'ema $X_\lambda$ pour $\lambda$ d\'ecrivant un ensemble pr\'eordonn\'e filtrant $\Lambda$. Appelons $f_\lambda : X \to X_\lambda$ les morphismes de $\mathbb{F}_p$-sch\'emas correspondant \`a la limite projective. D\'esignons par $\tau$ le grand site \'etale $\text{\'Et}$ ou le petit site \'etale $\text{\'et}$, et soit $(\mathcal{F}_\lambda)_{\lambda \in \Lambda}$ un syst\`eme inductif de faisceaux en $\mathbb{F}_p$-alg\`ebres avec $\mathcal{F}_\lambda$ d\'efini sur $X_{\lambda,\tau}$ pour chaque indice $\lambda$. On note $\mathcal{F}$ la limite des $f_\lambda^{-1} \mathcal{F}_\lambda$; c'est une $\mathbb{F}_p$-alg\`ebre sur $X_{\tau}$.

Le point \ref{image inverse RW} de la proposition implique qu'on a des isomorphismes \[ f_\lambda^{-1} \mathrm{W}_\bullet \Omega^\bullet_{(X_{\lambda,\tau}, \mathcal{F}_\lambda)} \overset{\sim}{\to} \mathrm{W}_\bullet \Omega^\bullet_{(X_\tau,f_\lambda^{-1} \mathcal{F}_\lambda)} \] de V-pro-complexes sur $X_{\tau}$. On en d\'eduit un isomorphisme \[\underset{\longrightarrow}{\mathrm{lim}} \, f_\lambda^{-1} \mathrm{W}_\bullet \Omega^\bullet_{(X_{\lambda,\tau},\mathcal{F}_\lambda)} \overset{\sim}{\to} \underset{\longrightarrow}{\mathrm{lim}} \, \mathrm{W}_\bullet \Omega^\bullet_{(X_\tau,f_\lambda^{-1} \mathcal{F}_\lambda)} . \] Le point \ref{limite inductive RW} assure alors qu'on a un isomorphisme  \[\underset{\longrightarrow}{\mathrm{lim}} \, f_\lambda^{-1} \mathrm{W}_\bullet \Omega^\bullet_{(X_{\lambda,\tau},\mathcal{F}_\lambda)} \overset{\sim}{\to}  \mathrm{W}_\bullet \Omega^\bullet_{(X_\tau,\mathcal{F})} \] car $\underset{\longrightarrow}{\mathrm{lim}} \, f_\lambda^{-1} \mathcal{F}_\lambda \overset{\sim}{\to} \mathcal{F}$ comme faisceaux sur $X_{\tau}$.

Cela s'applique au cas particulier o\`u $(A_\lambda)_\lambda$ une famille directe filtrante de $\mathbb{F}_p$-alg\`ebres avec \begin{gather*}
X_\lambda=\spec(A_\lambda) , \\
X= \underset{\longleftarrow}{\mathrm{lim}} \, \spec(A_\lambda) = \spec(\underset{\longrightarrow}{\mathrm{lim}} \, A_\lambda) , \\
\mathcal{F}_\lambda=O_{\spec(A_\lambda)} \text{ et } \mathcal{F}=O_{X} . 
\end{gather*}
\end{rem}

\paragraph{$\triangleright$ \bsc{Pro-complexe de De\thinspace Rham-Witt logarithmique.}}

	\subparagraph{D\'efinition locale.}
Soit $X$ un sch\'ema sur $\mathbb{F}_p$. Par d\'efinition des V-pro-complexes de De\thinspace Rham-Witt, $\mathrm{W}_n \Omega_{(X_{\text{\'et}},O_X)}^\bullet$ est une alg\`ebre diff\'erentielle gradu\'ee sur $X_{\text{\'et}}$ pour tout entier $n$, et $\mathrm{W}_n \Omega_{(X_{\text{\'et}},O_X)}^0$ est le faisceau des $n$-vecteurs de Witt de $O_X$ pour tout $n \geqslant 1$. Ainsi, \begin{itemize}
\item tout $x \in O_X$ a son \emph{repr\'esentant multiplicatif} $\underline{x} \in W_n(O_X)$,
\item tout $y \in W_n(O_X)$ a une image $\mathrm{d} y \in \mathrm{W}_n \Omega_{(X_{\text{\'et}},O_X)}^1$ par la diff\'erentielle $\mathrm{d}$,
\item et pour toute famille $\alpha_1,\cdots,\alpha_r$ avec $\alpha_i \in\mathrm{W}_n \Omega_{(X_{\text{\'et}},O_X)}^{j_i}$, on peut regarder le produit $\alpha_1 \cdots \alpha_r \in \mathrm{W}_n \Omega_{(X_{\text{\'et}},O_X)}^{j_1+\cdots+j_r}$.
\end{itemize} On consid\`ere alors les morphismes de faisceaux en groupes ab\'eliens sur $X_{\text{\'et}}$, pour $j \geqslant 1$ : \[  \left\lbrace \begin{array}{ccc} (O_X^\times)^{\otimes j} & \to & \mathrm{W}_n \Omega_{(X_{\text{\'et}},O_X)}^j \\
x_1 \otimes \cdots \otimes x_j & \mapsto & \mathrm{d}\underline{x_1} / \underline{x_1} \cdots \mathrm{d}\underline{x_j} / \underline{x_j}
\end{array} \right.,\] et on note $\nu_n^j(X)$ le faisceau image. En particulier on dispose du morphisme \[ \mathrm{d} \log : O_X^\times \to \mathrm{W}_n \Omega_{X_{\text{\'et}}}^1 \]

En voyant $\nu_n^j(X)$ comme le complexe $\cdots \to 0 \to \nu_n^j(X) \to 0 \to \cdots$ concentr\'e en degr\'e $0$, on d\'efinit les complexes \[ \mathbb{Z}/{p^n\mathbb{Z}}(j)_X:= \nu_n^j(X)[-j] \text{ et } \mathbb{Q}_p/{\mathbb{Z}_p}(j)_X:= \underset{\underset{n}{\longrightarrow}}{\mathrm{lim}} \; \nu_n^j(X) [-j]  \] la limite \'etant d\'efinie via les restrictions des morphismes $V_n^j : \mathrm{W}_n \Omega_{(X_{\text{\'et}},O_X)}^j \to \mathrm{W}_{n+1} \Omega_{(X_{\text{\'et}},O_X)}^j$ qui font partie de la d\'efinition de V-pro-complexe de $\mathrm{W}_\bullet \Omega_{(X_{\text{\'et}},O_X)}^\bullet$.

\begin{prop}
\label{suite dlog}
Si $X$ est un sch\'ema r\'egulier sur $\mathbb{F}_p$ avec $p$ premier, alors pour tout $n \geqslant 1$ on a une suite exacte \begin{equation} 
\label{suite exacte logarithmique} 0 \to O_X^\times \overset{(\cdot)^{p^n}}{\to} O_X^\times \overset{\mathrm{d} \log}{\to} \nu_n^1(X) \to 0 . \end{equation} de faisceaux sur $X_{\text{\'et}}$, le \emph{petit} site \'etale de $X$.
\end{prop}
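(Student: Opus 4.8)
La suite (\ref{suite exacte logarithmique}) étant une suite de faisceaux sur le petit site étale $X_{\text{\'et}}$, le plan est d'en tester l'exactitude sur les fibres aux points géométriques. La fibre de $O_X$ en un point géométrique $\bar x$ est le hensélisé strict $A:=O_{X,\bar x}^{\mathrm{sh}}$, un anneau local régulier (la régularité étant préservée par hensélisation stricte) contenant $\mathbb{F}_p$. En écrivant $A=\underset{\longrightarrow}{\mathrm{lim}}\,O_X(U)$ sur les voisinages étales affines de $\bar x$, en utilisant le point \ref{limite inductive RW} de la proposition \ref{propriétés RW} et la remarque \ref{lemme limite RW}, et puisque les foncteurs image et fibre commutent aux limites inductives filtrantes, la fibre de $\nu_n^1(X)$ en $\bar x$ s'identifie à l'image de $\mathrm{d}\log : A^\times \to \mathrm{W}_n\Omega_A^1$. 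On est ainsi ramené à montrer que la suite de groupes abéliens $0\to A^\times\overset{(\cdot)^{p^n}}{\to}A^\times\overset{\mathrm{d}\log}{\to}\nu_n^1(\spec A)\to 0$ est exacte. L'injectivité de $(\cdot)^{p^n}$ résulte de ce que $A$ est réduit (un anneau local régulier est intègre) : si $u^{p^n}=1$ alors $(u-1)^{p^n}=0$, d'où $u=1$. La surjectivité de $\mathrm{d}\log$ est tautologique, $\nu_n^1(\spec A)$ en étant par définition l'image.

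Pour $u\in A^\times$, la multiplicativité du représentant de Teichmüller donne $\mathrm{d}\log(u^{p^n})=\mathrm{d}(\underline{u}^{p^n})/\underline{u}^{p^n}=p^n\,\mathrm{d}\underline{u}/\underline{u}=p^n\,\mathrm{d}\log(u)$; or $p^n=0$ déjà dans $\mathrm{W}_n(A)$ pour toute $\mathbb{F}_p$-algèbre, la multiplication par $p$ sur $\mathrm{W}_n$ étant l'opérateur $(a_0,\dots,a_{n-1})\mapsto(0,a_0^p,\dots,a_{n-2}^p)$, nilpotent d'ordre $n$; donc $p^n$ annule la $\mathrm{W}_n(A)$-algèbre $\mathrm{W}_n\Omega_A^\bullet$, si bien que $\mathrm{d}\log(u^{p^n})=0$ et $(A^\times)^{p^n}\subseteq\ker(\mathrm{d}\log)$.

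Reste à établir $\ker(\mathrm{d}\log)\subseteq(A^\times)^{p^n}$, c'est là le cœur de l'affaire. Le point décisif est que le corps de base est ici $\mathbb{F}_p$, \emph{parfait} : le morphisme $\mathbb{F}_p\to A$ est donc un morphisme régulier d'anneaux noethériens, et le théorème de désingularisation de Popescu (Néron--Popescu) écrit $A=\underset{\longrightarrow}{\mathrm{lim}}\,A_i$ comme limite inductive filtrante de $\mathbb{F}_p$-algèbres $A_i$ \emph{lisses} de type fini. D'après le point \ref{limite inductive RW} de la proposition \ref{propriétés RW}, le pro-complexe de de\thinspace Rham-Witt commute à cette limite, $A^\times=\underset{\longrightarrow}{\mathrm{lim}}\,A_i^\times$, et noyaux et images commutent aux limites inductives filtrantes (exactes); il suffit donc d'établir la suite (\ref{suite exacte logarithmique}) pour une $\mathbb{F}_p$-algèbre lisse, ce qui est classique (\cite{Illusie}). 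Dans ce cas l'inclusion manquante se démontre par récurrence sur $n$. Pour $n=1$, $\mathrm{d}\log u=0$ signifie $\mathrm{d}u=0$ dans $\Omega_{A_i/\mathbb{F}_p}^1$ (via l'isomorphisme $\Omega^\bullet\simeq\mathrm{W}_1\Omega^\bullet$ du théorème \ref{thm de rham witt}), et pour $A_i$ lisse sur $\mathbb{F}_p$ l'isomorphisme de Cartier en degré $0$ donne exactement $\ker(\mathrm{d}:A_i\to\Omega_{A_i/\mathbb{F}_p}^1)=A_i^p$. Pour l'hérédité : la restriction $\mathrm{R}:\mathrm{W}_n\Omega^1\to\mathrm{W}_{n-1}\Omega^1$ envoie $\mathrm{d}\log u$ (au niveau $n$) sur $\mathrm{d}\log u$ (au niveau $n-1$), lequel est nul par récurrence, donc $u=v^{p^{n-1}}$ avec $v\in A_i^\times$ et $0=\mathrm{d}\log(v^{p^{n-1}})=p^{n-1}\,\mathrm{d}\log v$; comme $VF=p$ et $F\,\mathrm{d}\log=\mathrm{d}\log$ dans le complexe de de\thinspace Rham-Witt, on a $p^{n-1}\,\mathrm{d}\log v=V^{n-1}(\mathrm{d}v/v)$, et puisque $V$ est injectif sur $\mathrm{W}_\bullet\Omega^1$ d'une $\mathbb{F}_p$-algèbre lisse, il vient $\mathrm{d}v=0$, puis $v\in A_i^p$ par le cas $n=1$, d'où $u\in(A_i^\times)^{p^n}$.

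L'obstacle principal n'est pas dans les identités \emph{de routine} du complexe de de\thinspace Rham-Witt, mais plutôt : (i) dans la mise en forme soigneuse des réductions étales-locales, notamment l'identification de la fibre du faisceau image $\nu_n^1(X)$; (ii) dans l'usage du théorème de Popescu, qui est précisément ce qui autorise la réduction au cas d'un schéma lisse \emph{sur $\mathbb{F}_p$}, alors même que dans les applications $k$ et les corps résiduels sont imparfaits --- l'imperfection étant portée par $\Omega_{k/\mathbb{F}_p}^1\subseteq\Omega_{X/\mathbb{F}_p}^1$ et traitée uniformément; et (iii), si l'on préfère redémontrer le cas lisse plutôt que le citer, dans le contrôle précis de $F$, $V$, $\mathrm{R}$ et de la filtration de $\mathrm{W}_n\Omega^1$ pour $n>1$ (injectivité de $V$, isomorphisme de Cartier).
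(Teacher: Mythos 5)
Votre démonstration emprunte pour l'essentiel la même route que celle du texte : surjectivité de $\mathrm{d}\log$ par définition de $\nu_n^1$, injectivité de l'élévation à la puissance $p^n$ par réduction du schéma, et, pour l'exactitude au milieu, réduction au cas lisse sur $\mathbb{F}_p$ par le théorème de Popescu, combinée à la compatibilité du pro-complexe de de\thinspace Rham-Witt aux limites inductives filtrantes et aux images inverses (points \ref{limite inductive RW} et \ref{image inverse RW}, remarque \ref{lemme limite RW}), le cas lisse étant la proposition I.3.23.2 d'Illusie. La seule variante est d'ordre cosmétique : vous testez l'exactitude sur les fibres aux points géométriques et appliquez Popescu aux hensélisés stricts, là où le texte l'applique directement aux ouverts affines noethériens ; les deux réductions reposent sur les mêmes lemmes de passage à la limite et sont interchangeables.

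En revanche, l'esquisse facultative que vous donnez du cas lisse contient une erreur en son point décisif : $V$ n'est \emph{pas} injectif sur $\mathrm{W}_\bullet\Omega^1$ d'une $\mathbb{F}_p$-algèbre lisse. Pour tout $a\in A$ on a en effet $V(\mathrm{d}a)=V(F\,\mathrm{d}V(a))=p\,\mathrm{d}V(a)=\mathrm{d}(V(pa))=0$, de sorte que $V$ annule déjà toutes les formes exactes (ainsi $V(\mathrm{d}t)=0$ dans $\mathrm{W}_2\Omega^1_{\mathbb{F}_p[t]}$). De $V^{n-1}(\mathrm{d}v/v)=0$ on ne peut donc pas déduire $\mathrm{d}v=0$, mais seulement que $\mathrm{d}\log v$ appartient au noyau de $V^{n-1}$, lequel contient toutes les formes exactes et plus généralement $B_{n-1}\Omega^1_A$ ; c'est précisément pour traiter ce noyau que la démonstration d'Illusie requiert la descente itérée le long de l'isomorphisme de Cartier, et la récurrence telle que vous l'écrivez ne se referme pas. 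Comme vous invoquez de toute façon \cite{Illusie} pour le cas lisse, votre preuve reste complète ; il faut simplement supprimer ou corriger cette esquisse.
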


\begin{proof}
$\diamond$ La surjectivit\'e de $O_X^\times \overset{\mathrm{d} \log}{\to} \nu_n^1(X)$ vient de la d\'efinition de $\nu_n^1$.

$\diamond$ L'injectivit\'e de $O_X^\times \overset{(\cdot)^{p^n}}{\to} O_X^\times$ provient du fait que $X$ est r\'eduit. En effet, pour tout morphisme \'etale $Y \to Z$, $Y$ est r\'eduit si, et seulement si, $Z$ est r\'eduit (\cite[\S 2.3, Prop. 9]{BLR}). Ainsi, tout $X$-sch\'ema \'etale $Y$ est r\'eduit, donc $O_Y(Y)^\times \overset{(\cdot)^p}{\to} O_Y(Y)^\times$ est un morphisme de groupes injectif ($O_Y(Y)$ \'etant une $\mathbb{F}_p$-alg\`ebre).

Si $X$ est \emph{lisse} sur $\mathbb{F}_p$, l'exactitude de \[ O_X^\times \overset{(\cdot)^{p^n}}{\to} O_X^\times \overset{\mathrm{d} \log}{\to} \mathrm{W}_n \Omega^1_{X_{\text{\'et}}} \] est exactement \cite[Prop I.3.23.2, p. 580]{Illusie}. Sinon, on se ram\`ene au cas lisse gr\^ace au th\'eor\`eme de Popescu \cite[Th. 2.5]{Popescu2} comme suit. D\'ej\`a, on peut supposer $X$ affine noeth\'erien \footnote{Par d\'efinition d'un $\mathbb{F}_p$-sch\'ema r\'egulier, les fibres du morphisme structural $X \to \spec(\mathbb{F}_p)$ sont localement noeth\'eriennes.}, $X=\spec (A)$ pour une $\mathbb{F}_p$-alg\`ebre r\'eguli\`ere $A$ qui est aussi un anneau noeth\'erien. Alors $\mathbb{F}_p \to A$ est un morphisme r\'egulier d'anneaux noeth\'eriens, donc d'apr\`es le th\'eor\`eme de Popescu, $A$ est une limite inductive filtrante de $\mathbb{F}_p$-alg\`ebres lisses de type fini $A_\lambda$. Selon l'exactitude dans le cas lisse sur $\mathbb{F}_p$, les suites de faisceaux \[ O_{\spec(A_\lambda)}^\times \overset{(\cdot)^{p^n}}{\to} O_{\spec(A_\lambda)}^\times \overset{\mathrm{d} \log}{\to} \mathrm{W}_n \Omega^1_{\spec(A_\lambda)_{\text{\'et}}} \] sont exactes. En notant $f_\lambda$ les morphismes $\spec(A) \to \spec(A_\lambda)$ tels que $\spec(A) \to \underset{\underset{\lambda}{\longleftarrow}}{\mathrm{lim}} \; \spec(A_\lambda)$ est un isomorphisme, il vient que les suites de faisceaux (sur $\spec(A)$) \[ f_\lambda^{-1} O_{\spec(A_\lambda)}^\times \overset{(\cdot)^{p^n}}{\to} f_\lambda^{-1} O_{\spec(A_\lambda)}^\times \overset{\mathrm{d} \log}{\to} f_\lambda^{-1} \mathrm{W}_n \Omega^1_{\spec(A_\lambda)_{\text{\'et}}} \] sont exactes. Or $\underset{\underset{\lambda}{\longrightarrow}}{\mathrm{lim}} \; f_\lambda^{-1} O_{\spec(A_\lambda)} \overset{\sim}{\to} O_{spec(A)}$, donc $\underset{\underset{\lambda}{\longrightarrow}}{\mathrm{lim}} \; f_\lambda^{-1} O_{\spec(A_\lambda)}^\ast \overset{\sim}{\to} O_{spec(A)}^\ast$ et d'apr\`es la remarque \ref{lemme limite RW}, on a $\underset{\underset{\lambda}{\longrightarrow}}{\mathrm{lim}} \; f_\lambda^{-1} \mathrm{W}_n \Omega^1_{\spec(A_\lambda)_{\text{\'et}}} = \mathrm{W}_n \Omega^1_{\spec(A)_{\text{\'et}}}$.
\end{proof}

	\subparagraph{D\'efinition globale.}
Pour $j \geqslant 1$, on peut \'egalement d\'efinir de la m\^eme fa\c{c}on le faisceau $\nu_n^j$ et les complexes $\mathbb{Z}/p\mathbb{Z}(j)$, $\mathbb{Q}_p/\mathbb{Z}_p(j)$ sur le grand site \'etale de $\spec(\mathbb{F}_p)$ \`a partir de $\mathrm{W}_\bullet \Omega^\bullet_{\spec(\mathbb{F}_p)_{\text{\'Et}}}$. On a alors, pour tout $\mathbb{F}_p$-sch\'ema $X$, \begin{gather*}
\left( \nu_n^j \right)_{|X_{\text{\'et}}} = \nu_n^j(X) , \\
\mathbb{Z}/p\mathbb{Z}(j)_{| X_{\text{\'et}}} = \mathbb{Z}/p\mathbb{Z}(j)_X , \\
\mathbb{Q}_p/\mathbb{Z}_p(j)_{|X_\text{\'et}} = \mathbb{Q}_p/\mathbb{Z}_p(j)_X
\end{gather*} les restrictions \'etant induites par rapport au foncteur \[ \left\lbrace \begin{array}{ccc}
X_{\text{\'et}} & \to & \spec(\mathbb{F}_p)_{\text{\'Et}} \\
(U \to X) & \mapsto & U
\end{array} \right. . \]

\paragraph{$\triangleright$ \bsc{Faisceaux \`a la Kato.}} On d\'efinit pour $j \geqslant 1$ les faisceaux sur le \emph{grand} site \'etale de $\spec(\mathbb{F}_p)$ : \[ \mathbb{Q}_q/{\mathbb{Z}_q}(j):= \underset{\underset{n}{\longrightarrow}}{\mathrm{lim}} \; \mu_{q^n}^{\otimes j} \text{ pour } q \text{ premier } \neq p \] o\`u $\mu_{q^n}$ est le groupe des racines $q^n$-i\`emes de l'unit\'e; et on d\'efinit le complexe de faisceaux \[ \mathbb{Q}/{\mathbb{Z}}(j):= \left( \bigoplus_{q \neq p \text{ premier}} \mathbb{Q}_q/{\mathbb{Z}_q}(j) \right) \oplus \mathbb{Q}_p/{\mathbb{Z}_p}(j) .  \] Pour le corps $\mathbb{Q}$ des nombres rationnels, on d\'efinit de m\^eme $\mathbb{Q}_q/\mathbb{Z}_q(j):=\underset{\underset{n}{\longrightarrow}}{\mathrm{lim}} \; \mu_{q^n}^{\otimes j}$ pour tout nombre premier $q$ et on pose \[ \mathbb{Q}/\mathbb{Z}(j):= \bigoplus_{q \text{ premier}} \mathbb{Q}_q/\mathbb{Z}_q(j) . \] On d\'efinit enfin $\mathbb{Q}/\mathbb{Z}(0)$ comme le faisceau constant $\mathbb{Q}/\mathbb{Z}$ sur le grand site \'etale de $\mathbb{F}_p$ ou de $\mathbb{Q}$.

Introduisons la notation utile suivante qu'on rencontrera par la suite. Si $d$ et $j$ sont des entiers positifs, pour tout $\mathbb{F}_p$-sch\'ema $X$ (ou tout sch\'ema $X$ sur un corps de caract\'eristique $p$) on note $\mathcal{H}^d_{(X)}(\mathbb{Q}/\mathbb{Z}(j))$ le faisceau sur le petit site de Zariski de $X$ associ\'e au pr\'efaisceau $U \mapsto \h^d(U,\mathbb{Q}/\mathbb{Z}(j))$.

\begin{lemma}
\label{lemme RW inverse}
Soit $X$ un $\mathbb{F}_p$-sch\'ema qui est une limite projective de $\mathbb{F}_p$-sch\'emas $X_\lambda$ pour $\lambda$ d\'ecrivant un ensemble pr\'eordonn\'e filtrant $\Lambda$. On suppose que les morphismes de transitions entre les $X_\lambda$ sont affines et on note $f_\lambda : X \to X_\lambda$ les morphismes canoniques. Alors pour tout $n \geqslant 0$ et $j \geqslant 1$, on a canoniquement des isomorphismes \[ \underset{\underset{\lambda}{\longrightarrow}}{\mathrm{lim}} \, f_\lambda^{-1} (\nu_n^j(X_\lambda)) \overset{\sim}{\to} \nu_n^j(X) \] et \[ \underset{\underset{\lambda}{\longrightarrow}}{\mathrm{lim}} \, f_\lambda^{-1} \mathbb{Q}/\mathbb{Z}(j)_{X_\lambda} \overset{\sim}{\to} \mathbb{Q}/\mathbb{Z}(j)_{X} . \]
\end{lemma}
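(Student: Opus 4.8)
Le plan est de tout ramener à la remarque \ref{lemme limite RW}. L'ingrédient clé hors de celle-ci est l'énoncé standard sur les limites de schémas : l'hypothèse que les morphismes de transition sont affines assure que $X = \underset{\underset{\lambda}{\longleftarrow}}{\mathrm{lim}}\, X_\lambda$ est bien représenté par un schéma et que le morphisme canonique $\underset{\underset{\lambda}{\longrightarrow}}{\mathrm{lim}}\, f_\lambda^{-1} O_{X_\lambda} \to O_X$ est un isomorphisme de faisceaux de $\mathbb{F}_p$-algèbres sur $X_{\text{\'et}}$ (résultats usuels sur les limites projectives de schémas à morphismes de transition affines). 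Comme $(-)^\times$ commute aux limites inductives filtrantes de faisceaux d'anneaux et $(-)^{\otimes j}$ aux limites inductives filtrantes de faisceaux abéliens, on en déduit $\underset{\underset{\lambda}{\longrightarrow}}{\mathrm{lim}}\, f_\lambda^{-1}(O_{X_\lambda}^\times)^{\otimes j} \overset{\sim}{\to} (O_X^\times)^{\otimes j}$. D'autre part, la remarque \ref{lemme limite RW} appliquée avec $\tau = \text{\'et}$ et $\mathcal{F}_\lambda = O_{X_\lambda}$ donne un isomorphisme de V-pro-complexes $\underset{\underset{\lambda}{\longrightarrow}}{\mathrm{lim}}\, f_\lambda^{-1}\mathrm{W}_\bullet\Omega^\bullet_{(X_{\lambda,\text{\'et}},O_{X_\lambda})} \overset{\sim}{\to} \mathrm{W}_\bullet\Omega^\bullet_{(X_{\text{\'et}},O_X)}$, donc en particulier, en chaque bidegré, $\underset{\underset{\lambda}{\longrightarrow}}{\mathrm{lim}}\, f_\lambda^{-1}\mathrm{W}_n\Omega^j_{(X_{\lambda,\text{\'et}},O_{X_\lambda})} \overset{\sim}{\to} \mathrm{W}_n\Omega^j_{(X_{\text{\'et}},O_X)}$.

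Ensuite je vérifierais que, via ces deux identifications, la limite inductive des morphismes $x_1 \otimes \cdots \otimes x_j \mapsto \mathrm{d}\underline{x_1}/\underline{x_1}\cdots\mathrm{d}\underline{x_j}/\underline{x_j}$ attachés aux $X_\lambda$ est le morphisme analogue attaché à $X$ : c'est pure fonctorialité, puisque les morphismes $\mathrm{W}_\bullet\Omega_u^\bullet$ et les morphismes canoniques $f^{-1}\mathrm{W}_\bullet\Omega^\bullet_{(C,A)} \to \mathrm{W}_\bullet\Omega^\bullet_{(D,f^{-1}A)}$ respectent les représentants multiplicatifs, la différentielle et les produits, et sont compatibles aux morphismes de structure vers la limite. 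Comme $f_\lambda^{-1}$ est exact et commute à la faisceautisation, $f_\lambda^{-1}\nu_n^j(X_\lambda)$ est le faisceau image de $f_\lambda^{-1}(O_{X_\lambda}^\times)^{\otimes j} \to f_\lambda^{-1}\mathrm{W}_n\Omega^j_{(X_{\lambda,\text{\'et}},O_{X_\lambda})}$ ; les limites inductives filtrantes de faisceaux étant de même exactes et commutant à la faisceautisation, $\underset{\underset{\lambda}{\longrightarrow}}{\mathrm{lim}}\, f_\lambda^{-1}\nu_n^j(X_\lambda)$ est le faisceau image du morphisme limite, c'est-à-dire, d'après ce qui précède, de celui définissant $\nu_n^j(X)$. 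Son image étant par définition $\nu_n^j(X)$, on obtient le premier isomorphisme (le cas $n=0$ étant trivial, les deux membres étant nuls).

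Pour le second isomorphisme, on raisonne terme à terme via $\mathbb{Q}/\mathbb{Z}(j) = \bigl(\bigoplus_{q\neq p}\mathbb{Q}_q/\mathbb{Z}_q(j)\bigr)\oplus\mathbb{Q}_p/\mathbb{Z}_p(j)$. Le foncteur $f_\lambda^{-1}$, étant un adjoint à gauche, commute aux sommes directes et aux limites inductives sur $n$, et deux limites inductives filtrantes commutent entre elles. Pour $q\neq p$, les faisceaux $\mu_{q^n}^{\otimes j}$ proviennent de la base, donc $f_\lambda^{-1}\mu_{q^n}^{\otimes j}=\mu_{q^n}^{\otimes j}$ sur $X_{\text{\'et}}$ et $\underset{\underset{\lambda}{\longrightarrow}}{\mathrm{lim}}\, f_\lambda^{-1}\mathbb{Q}_q/\mathbb{Z}_q(j)_{X_\lambda} = \underset{\underset{n}{\longrightarrow}}{\mathrm{lim}}\,\underset{\underset{\lambda}{\longrightarrow}}{\mathrm{lim}}\, f_\lambda^{-1}\mu_{q^n}^{\otimes j} = \underset{\underset{n}{\longrightarrow}}{\mathrm{lim}}\,\mu_{q^n}^{\otimes j} = \mathbb{Q}_q/\mathbb{Z}_q(j)_X$ ; pour la partie en $p$, $\underset{\underset{\lambda}{\longrightarrow}}{\mathrm{lim}}\, f_\lambda^{-1}\mathbb{Q}_p/\mathbb{Z}_p(j)_{X_\lambda} = \underset{\underset{n}{\longrightarrow}}{\mathrm{lim}}\,\underset{\underset{\lambda}{\longrightarrow}}{\mathrm{lim}}\, f_\lambda^{-1}\nu_n^j(X_\lambda)[-j] = \underset{\underset{n}{\longrightarrow}}{\mathrm{lim}}\,\nu_n^j(X)[-j] = \mathbb{Q}_p/\mathbb{Z}_p(j)_X$ par le premier point. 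En sommant, $\underset{\underset{\lambda}{\longrightarrow}}{\mathrm{lim}}\, f_\lambda^{-1}\mathbb{Q}/\mathbb{Z}(j)_{X_\lambda}\overset{\sim}{\to}\mathbb{Q}/\mathbb{Z}(j)_X$.

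Le point qui demande un peu de soin n'est aucune estimée difficile, mais d'une part l'invocation correcte de l'énoncé de limites de schémas $O_X = \underset{\underset{\lambda}{\longrightarrow}}{\mathrm{lim}}\, f_\lambda^{-1}O_{X_\lambda}$, seul endroit où sert l'affinité des morphismes de transition, et d'autre part la vérification propre que la formation du faisceau image commute à $f_\lambda^{-1}$ et aux limites inductives filtrantes et que les morphismes de type $\mathrm{d}\log$ sont assez fonctoriels pour que la limite des morphismes de définition soit le morphisme de définition de la limite ; tout le contenu réel est déjà contenu dans la remarque \ref{lemme limite RW}.
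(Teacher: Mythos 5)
Votre preuve est correcte et suit essentiellement la même démarche que celle du papier : réduction du cas de $\nu_n^j$ à la remarque \ref{lemme limite RW}, puis décomposition de $\mathbb{Q}/\mathbb{Z}(j)$ en parties de torsion $q$-primaires, la partie $p$-primaire se ramenant à $\nu_n^j$ et la partie première à $p$ au fait que les $\mu_{q^n}^{\otimes j}$ proviennent de la base. Vous explicitez simplement ce que le papier laisse implicite, à savoir le passage des complexes de de\thinspace Rham--Witt eux-mêmes aux faisceaux images $\nu_n^j$ via l'exactitude de $f_\lambda^{-1}$ et des colimites filtrantes et la fonctorialité des morphismes $\mathrm{d}\log$.
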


\begin{proof}
L'isomorphisme du lemme pour $\nu_n^j$ vaut en raison de la remarque \ref{lemme limite RW}.

Pour $\mathbb{Q}/\mathbb{Z}(j)$ on distingue les parties de torsions $q$-primaires pour les diff\'erents nombres premiers $q$. Le cas de la torsion $p$-primaire se ram\`ene au cas de $\nu_n^j$. La $q$-torsion pour $q$ premier $\neq p$ r\'esulte du fait que pour tout entier $n \geqslant 0$, on a $ \underset{\longrightarrow}{\mathrm{lim}} \, f_\lambda^{-1} (\mu_{q^n}^{\otimes j})_{|(X_\lambda)_{\text{\'et}}} \overset{\sim}{\to}  (\mu_{q^n}^{\otimes j})_{|X_{\text{\'et}}}$.
\end{proof}

	\subsection{Conjecture de Gersten}

On fixe un entier positif $d$.

\begin{nota} \begin{enumerate}
\item Pour tout faisceau \'etale $\mathcal{F}$ sur un $k$-sch\'ema $U$ et tout ferm\'e $Z$ de $U$, $\h^d_Z(U,\mathcal{F})$ d\'esigne le groupe de cohomologie de $U$ \`a coefficients dans $\mathcal{F}$ et \`a support dans $Z$.
\item Pour $\mathcal{F}$ un faisceau \'etale sur un $k$-sch\'ema $X$, on note, pour tout $x \in X$ (point ensembliste), $\h^d_x (X,\mathcal{F})$ la limite inductive $\underset{\longrightarrow}{\mathrm{lim}} \; \h^d_{U \cap \overline{\{ x \} }} (U, \mathcal{F}_{\mid U})$ portant sur les ouverts $U$ de $X$ (pour la topologie de Zariski) contenant $x$.
\item Enfin, pour un point $x$ d'un sch\'ema $X$, on note $i_x$ le morphisme de sch\'emas $\spec(\kappa(x)) \to X$ et $(i_x)_\ast$ est le morphisme qui transporte les faisceaux sur $\spec(\kappa(x))$ vers $X$.
\end{enumerate}
\end{nota}

Comme expliqu\'e dans \cite[\S 1.1]{CT-H-K}, pour tout faisceau $\mathcal{F}$ sur le petit site \'etale d'un sch\'ema $X$ \'equidimensionnel \footnote{implicitement de dimension de Krull finie} et noeth\'erien, on a la suite spectrale convergente de coniveau \begin{equation}
\label{suite spectrale coniveau} E_1^{p,q} = \bigoplus_{x \in X^{(p)}} \h_x^{p+q}(X,\mathcal{F}) \Rightarrow \h^{p+q}(X,\mathcal{F}) ,
\end{equation} dont on tire le complexe de Cousin de faisceaux zariskiens sur $X$ : \begin{equation}
\label{complexe Cousin}
 0 \to \bigoplus_{x \in X^{(0)}} (i_x)_\ast \h_x^d(X,\mathcal{F}) \to \bigoplus_{x \in X^{(1)}} (i_x)_\ast \h_x^{d+1}(X,\mathcal{F}) \to \cdots  \to \bigoplus_{x \in X^{(d+r)}} (i_x)_\ast \h_x^{d+r}(X,\mathcal{F}) \to \cdots .
\end{equation}

On dit que l\emph{a conjecture de Gersten vaut (en degr\'e $d$) pour $X$ et $\mathcal{F}$} lorsque le complexe \eqref{complexe Cousin} est une r\'esolution (flasque) du faisceau $\mathcal{H}^d_{(X)}(\mathcal{F})$ sur $X$ pour la topologie de Zariski obtenu en faisceautisant le pr\'efaisceau $U \mapsto \h^d(U,\mathcal{F})$. Cela signifie que la suite exacte de faisceaux zariskiens suivante est exacte : \begin{multline}
\label{complexe gersten}
 0 \to \mathcal{H}^d_{(X)}(\mathcal{F}) \to \bigoplus_{x \in X^{(0)}} (i_x)_\ast \h_x^d(X,\mathcal{F}) \to \bigoplus_{x \in X^{(1)}} (i_x)_\ast\h_x^{d+1}(X,\mathcal{F}) \to \cdots \\
 \cdots \to \bigoplus_{x \in X^{(d+r)}} (i_x)_\ast \h_x^{d+r}(X,\mathcal{F}) \to \cdots .
\end{multline} On note ce complexe $G^d(X,\mathcal{F})$ qu'il soit exact ou non.

Remarquons que lorsque $X$ est de plus int\`egre, le faisceau $\bigoplus_{x \in X^{(0)}} (i_x)_\ast \h_x^d(X,\mathcal{F})$ est plus simplement le faisceau $(i_\eta)_\ast \h^d(K(X),\mathcal{F})$ o\`u $\eta$ d\'esigne le point g\'en\'erique de $X$ et $K(X)$ son corps de fonctions.

\begin{rem}
\label{rem fonctarialite gersten}
Au vu de la construction de \cite[\S 1.1]{CT-H-K}, la suite spectrale \eqref{suite spectrale coniveau} ci-dessus (et donc aussi les complexes \eqref{complexe Cousin} et \eqref{complexe gersten}) est fonctorielle contravariante pour les morphismes plats; tout morphisme plat $f : Y \to X$ induit un morphisme de complexes $ f^{-1} G(X,\mathcal{F}) \to G(Y,f^{-1} \mathcal{F})$ naturel en le faisceau $\mathcal{F}$ sur $X$. Dans la suite on s'int\'eresse aux immersions ouvertes $U \hookrightarrow X$ et aux anneaux locaux $\spec(O_{X,x_0}) \to X$ pour $x_0 \in X$.
\end{rem}

\begin{lemma}
\label{lemma fibres gersten}
Soit $X$ un sch\'ema \'equidimensionnel noeth\'erien et soit $x_0 \in X$ (on note $Y= \spec(O_{X,x_0})$ et $i$ le morphisme $\spec(O_{X,x_0}) \to X$). Soit $\mathcal{F}$ un faisceau \'etale sur $X$. On a un morphisme de complexes $\phi : G^d(X,\mathcal{F}) \to G^d(Y, i^{-1} \mathcal{F})$ (rem. \ref{rem fonctarialite gersten}). Alors $\phi$ induit un isomorphisme \[ G^d(X,\mathcal{F})_{x_0} \overset{\sim}{\to} G^d(Y, i^{-1} \mathcal{F})(Y) . \] C'est-\`a-dire que la fibre de $G(X,\mathcal{F})$ en $x_0$ s'identifie aux sections globales de $G(Y, i^{-1} \mathcal{F})$.
\end{lemma}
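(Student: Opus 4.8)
\emph{Esquisse de démonstration.} Le plan est de décrire terme à terme les deux complexes et de vérifier que $\phi$ est un isomorphisme en chaque degré --- ce qui suffit, $\phi$ étant déjà un morphisme de complexes (remarque \ref{rem fonctarialite gersten}). Fixons $p \geqslant 0$ et notons $G^d(X,\mathcal{F})^p = \bigoplus_{x \in X^{(p)}} (i_x)_\ast \h_x^{d+p}(X,\mathcal{F})$ le terme en degré $p$. Le foncteur « fibre en $x_0$ » commute aux sommes directes (la faisceautisation ne change pas les fibres), d'où
\[ \bigl( G^d(X,\mathcal{F})^p \bigr)_{x_0} = \bigoplus_{x \in X^{(p)}} \bigl( (i_x)_\ast \h_x^{d+p}(X,\mathcal{F}) \bigr)_{x_0} . \]
Or, pour $x \in X$ et un groupe abélien $A$, la fibre en $x_0$ du faisceau zariskien $(i_x)_\ast A$ vaut $A$ si $x$ appartient à tout voisinage ouvert de $x_0$ --- c'est-à-dire si $x_0 \in \overline{\{x\}}$ --- et $0$ sinon ; en passant à la limite sur les ouverts contenant $x_0$ on obtient donc
\[ \bigl( G^d(X,\mathcal{F})^p \bigr)_{x_0} = \bigoplus_{\substack{x \in X^{(p)} \\ x_0 \in \overline{\{x\}}}} \h_x^{d+p}(X,\mathcal{F}) . \]

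Du côté de $Y$, tout ouvert est quasi-compact ($Y$ est noethérien), donc pour toute famille $(A_x)_x$ de groupes abéliens indexée par des points $x$ de $Y$ on a $\Gamma\bigl(Y,\, \bigoplus_x (i_x)_\ast A_x\bigr) = \bigoplus_x A_x$ (un recollement de sections à supports finis reste à support fini) ; d'où
\[ G^d(Y, i^{-1}\mathcal{F})^p(Y) = \bigoplus_{x \in Y^{(p)}} \h_x^{d+p}(Y, i^{-1}\mathcal{F}) . \]
Il reste à comparer les deux membres de droite. Les points de $Y = \spec(O_{X,x_0})$ s'identifient aux générisations de $x_0$ dans $X$, soit aux $x \in X$ avec $x_0 \in \overline{\{x\}}$, et pour un tel $x$ on a $O_{Y,x} = O_{X,x}$, de sorte que $x$ y a la même codimension : ainsi $Y^{(p)} = \{x \in X^{(p)} : x_0 \in \overline{\{x\}}\}$, et les deux ensembles d'indices coïncident.

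Pour comparer les groupes, fixons un tel $x$ et notons $j_x : \spec(O_{X,x}) \to X$ le morphisme canonique, qui se factorise par $i$ sous la forme $\spec(O_{X,x}) = \spec(O_{Y,x}) \to Y \xrightarrow{i} X$. Le point $x$ étant fermé dans $\spec(O_{X,x})$, le groupe $\h_x^{d+p}(\spec(O_{X,x}), j_x^{-1}\mathcal{F})$ est la cohomologie à support dans le point fermé ; comme $\spec(O_{X,x})$ est la limite projective des ouverts affines de $X$ contenant $x$ et que la cohomologie à supports commute à une telle limite (argument de passage à la limite analogue à celui de la preuve de la proposition \ref{suite dlog}), ce groupe s'identifie canoniquement à $\h_x^{d+p}(X,\mathcal{F})$ ; de même, en utilisant les ouverts de $Y$ contenant $x$, il s'identifie à $\h_x^{d+p}(Y, i^{-1}\mathcal{F})$. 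La flèche de fonctorialité par image inverse plate $\h_x^{d+p}(X,\mathcal{F}) \to \h_x^{d+p}(Y, i^{-1}\mathcal{F})$ respectant ces identifications, c'est un isomorphisme.

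Au vu de la construction de la suite spectrale de coniveau de \cite[\S 1.1]{CT-H-K}, le morphisme $\phi$ en degré $p$ se lit, une fois restreint à la fibre en $x_0$ à gauche et aux sections globales à droite et via les identifications précédentes, comme la somme directe sur $x \in Y^{(p)}$ des flèches $\h_x^{d+p}(X,\mathcal{F}) \to \h_x^{d+p}(Y, i^{-1}\mathcal{F})$ : c'est donc un isomorphisme en chaque degré, partant un isomorphisme de complexes. Le point délicat est ce dernier : il faut s'assurer que la flèche de fonctorialité du complexe de Cousin fournie par la remarque \ref{rem fonctarialite gersten} s'exprime bien, facteur par facteur, comme la flèche d'image inverse plate sur chacun des groupes $\h_x^\bullet$, ce qui résulte de la naturalité de la construction de \cite[\S 1.1]{CT-H-K} mais demande d'en dérouler les détails.
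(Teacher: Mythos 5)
Votre démarche est pour l'essentiel celle du texte : identification des ensembles d'indices $Y^{(p)}=\{x\in X^{(p)} : x_0\in\overline{\{x\}}\}$, identification des groupes $\h^{d+p}_x(X,\mathcal{F})\simeq \h^{d+p}_x(Y,i^{-1}\mathcal{F})$ par passage à la limite sur les ouverts, et vérification que la flèche de fonctorialité induit bien, facteur par facteur, ces identifications (point que le texte admet au même niveau de détail que vous). Le calcul des fibres des sommes directes de faisceaux gratte-ciel et celui des sections globales sur $Y$ sont corrects.

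Il y a cependant une omission concrète : le complexe $G^d(X,\mathcal{F})$ de l'énoncé est le complexe \emph{augmenté} \eqref{complexe gersten}, qui commence par le terme $\mathcal{H}^d_{(X)}(\mathcal{F})$, et non le seul complexe de Cousin \eqref{complexe Cousin}. Votre preuve ne traite que les termes $\bigoplus_{x\in X^{(p)}}(i_x)_\ast\h^{d+p}_x(X,\mathcal{F})$ pour $p\geqslant 0$ et passe sous silence la comparaison $\mathcal{H}^d_{(X)}(\mathcal{F})_{x_0}\overset{\sim}{\to}\mathcal{H}^d_{(Y)}(i^{-1}\mathcal{F})(Y)=\h^d(Y,i^{-1}\mathcal{F})$, qui n'est pas un calcul de gratte-ciel mais un énoncé de continuité de la cohomologie étale le long de la limite projective $Y=\varprojlim U$ (c'est l'objet du lemme \ref{passage limite} du texte, via \cite[Exp. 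VII, Th. 5.7.]{SGA4}). Cette composante est essentielle puisque c'est précisément l'exactitude en ce cran qui fait du complexe de Cousin une résolution de $\mathcal{H}^d_{(X)}(\mathcal{F})$ dans l'application au théorème \ref{thm gersten}. La lacune se comble par le même type d'argument de limite que vous employez déjà pour les $\h^{d+p}_x$, mais il faut l'énoncer.
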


\begin{proof}
Notons tout d'abord que pour tout $x \in X$ et tout ouvert $U$ de $X$ contenant $x$, le morphisme canonique $\h^d_x(X,\mathcal{F}) \to \h^d_x(U,\mathcal{F}_{|U})$ est un isomorphisme, et que pour tout $x \in Y$, le morphisme canonique $\h^d_x(X,\mathcal{F}) \to \h^d_x(Y,i^{-1}\mathcal{F})$ est aussi un isomorphisme.

Pour tout ouvert $U$ de $X$ contenant $x_0$, on a un morphisme de complexes $G(U,\mathcal{F}) \to G(Y,i^{-1} \mathcal{F})$ dont on tire, en prenant les sections globales, le diagramme commutatif \[ \xymatrix{
0 \ar[r] & \mathcal{H}^d_{(U)}(\mathcal{F})(U) \ar[r] \ar[d] & \bigoplus_{x \in U^{(0)}} \h_x^d(U,\mathcal{F}) \ar[r] \ar[d] & \bigoplus_{x \in U^{(1)}} \h_x^{d+1}(U,\mathcal{F}) \ar[r] \ar[d] & \cdots \\
0 \ar[r] & \mathcal{H}^d_{(Y)}(i^{-1} \mathcal{F})(Y) \ar[r] & \bigoplus_{x \in Y^{(0)}}\h_x^d(Y, i^{-1}\mathcal{F}) \ar[r] & \bigoplus_{x \in Y^{(1)}} \h_x^{d+1}(Y,i^{-1} \mathcal{F}) \ar[r] & \cdots
} \] qui se r\'ecrit, compte tenu de ce qui a \'et\'e dit ci-dessus, et en constatant que $\mathcal{H}^d_{(Y)}(i^{-1} \mathcal{F})(Y)=\h^d(Y,i^{-1} \mathcal{F})$ (car $Y$ est le spectre d'un anneau local) : \[ \xymatrix{
0 \ar[r] & \mathcal{H}^d_{(X)}(\mathcal{F})(U) \ar[r] \ar[d] & \bigoplus_{x \in U^{(0)}} \h_x^d(X,\mathcal{F}) \ar[r] \ar[d] & \bigoplus_{x \in U^{(1)}} \h_x^{d+1}(X,\mathcal{F}) \ar[r] \ar[d] & \cdots \\
0 \ar[r] & \h^d(Y,i^{-1} \mathcal{F}) \ar[r] & \bigoplus_{x \in Y^{(0)}}\h_x^d(X, \mathcal{F}) \ar[r] & \bigoplus_{x \in Y^{(1)}} \h_x^{d+1}(X,\mathcal{F}) \ar[r] & \cdots
} \] dont les fl\`eches verticales $\bigoplus_{x \in U^{(r)}} \h_x^{d+r}(X,\mathcal{F}) \to \bigoplus_{x \in Y^{(r)}}  \h_x^{d+r}(X,\mathcal{F})$ sont les projections induites par les inclusions $\{x \in Y^{(r)} \} \subseteq \{ x \in U^{(r)} \}$ .

En passant \`a la limite pour $U \ni x_0$, on obtient le diagramme commutatif \[ \xymatrix{
0 \ar[r] & \mathcal{H}^d_{(X)}(\mathcal{F})_{x_0} \ar[r] \ar[d] & \bigoplus_{\underset{x_0 \in \overline{\{ x \}}}{x \in X^{(0)}}} \h_x^d(X,\mathcal{F}) \ar[r] \ar[d] & \bigoplus_{\underset{x_0 \in \overline{\{ x \}}}{x \in X^{(1)}}} \h_x^{d+1}(X,\mathcal{F}) \ar[r] \ar[d] & \cdots \\
0 \ar[r] & \h^d(Y,i^{-1} \mathcal{F}) \ar[r] & \bigoplus_{x \in Y^{(0)}}\h_x^d(X, \mathcal{F}) \ar[r] & \bigoplus_{x \in Y^{(1)}} \h_x^{d+1}(X,\mathcal{F}) \ar[r] & \cdots
} . \] Or $\mathcal{H}^d_{(X)}(\mathcal{F})_{x_0} \to \h^d(Y,i^{-1} \mathcal{F})$ est un isomorphisme (d'apr\`es le lemme \ref{lemme RW inverse} ci-dessous), et les autres fl\`eches verticales aussi car l'inclusion d'espaces topologiques $Y \subseteq X$ est exactement l'ensemble des $x \in X$ tels que $x_0 \in \overline{\{ x \}}$.
\end{proof}

\begin{lemma}
\label{passage limite}
Soit $X$ un sch\'ema et soit $x_0 \in X$ (point ensembliste). Soit $\mathcal{F}$ un faisceau \'etale sur $X$. Notons $Y= \spec(O_{X,x_0})$ et $i$ le morphisme $\spec(O_{X,x_0}) \to X$. Alors on a un isomorphisme canonique \[ \mathcal{H}^d_{(X)}(\mathcal{F})_{x_0} \overset{\sim}{\to} \h^d(Y,i^{-1} \mathcal{F}) \] o\`u le membre de gauche est la fibre de $\mathcal{H}^d_{(X)}(\mathcal{F})$ en $x_0$.
\end{lemma}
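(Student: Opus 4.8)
The statement to prove is that for a scheme $X$, a point $x_0 \in X$, an étale sheaf $\mathcal{F}$ on $X$, and $Y = \spec(O_{X,x_0})$ with $i : Y \to X$ the canonical morphism, the fiber $\mathcal{H}^d_{(X)}(\mathcal{F})_{x_0}$ is canonically isomorphic to $\h^d(Y, i^{-1}\mathcal{F})$.

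The plan is to unwind the definition of $\mathcal{H}^d_{(X)}(\mathcal{F})$ as the Zariski sheafification of the presheaf $U \mapsto \h^d(U, \mathcal{F}_{|U})$, and use that sheafification commutes with taking stalks. Concretely, the stalk $\mathcal{H}^d_{(X)}(\mathcal{F})_{x_0}$ is computed as the filtered colimit of the sections of the presheaf over the Zariski-open neighbourhoods $U$ of $x_0$, since sheafification does not change stalks; that is, $\mathcal{H}^d_{(X)}(\mathcal{F})_{x_0} \cong \varinjlim_{U \ni x_0} \h^d(U, \mathcal{F}_{|U})$, the colimit being taken over opens $U$ of $X$ containing $x_0$, ordered by reverse inclusion. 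First I would state this identification carefully.

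The main point is then the identification $\varinjlim_{U \ni x_0} \h^d(U, \mathcal{F}_{|U}) \overset{\sim}{\to} \h^d(Y, i^{-1}\mathcal{F})$. This is the standard continuity (or "limit") property of étale cohomology: $Y = \spec(O_{X,x_0}) = \varprojlim_{U \ni x_0} U$ is the filtered projective limit of the affine (hence quasi-compact and quasi-separated) open neighbourhoods of $x_0$, with affine transition morphisms, and $i^{-1}\mathcal{F} = \varinjlim_U (\mathcal{F}_{|U})_{|Y}$ up to the natural identifications. Under these finiteness hypotheses étale cohomology commutes with such limits, so $\h^d(Y, i^{-1}\mathcal{F}) \cong \varinjlim_{U \ni x_0} \h^d(U, \mathcal{F}_{|U})$. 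I would cite the relevant statement from SGA 4 (VII.5.7) or the Stacks project for this; strictly speaking one reduces first to affine opens $U$ so that the system is one of quasi-compact quasi-separated schemes with affine transition maps, which is exactly the setting of the continuity theorem. Composing the two displayed isomorphisms gives the claim, and naturality in $\mathcal{F}$ and compatibility with the transition maps make the composite canonical.

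The step I expect to be the only real content is the appeal to continuity of étale cohomology along the pro-system $Y = \varprojlim U$: one must check that the hypotheses (quasi-compact, quasi-separated $U$'s, affine transition morphisms, and that $i^{-1}\mathcal{F}$ is indeed the colimit of the pulled-back sheaves) are met, which is immediate once one restricts to the cofinal system of affine open neighbourhoods of $x_0$. Everything else — that sheafification preserves stalks, and that the colimit defining the stalk ranges over the same index category as the pro-system — is formal.
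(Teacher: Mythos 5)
Your proposal is correct and follows essentially the same route as the paper: identify the stalk with the filtered colimit of $\h^d(U,\mathcal{F})$ over open neighbourhoods of $x_0$, restrict to the cofinal system of affine opens so that $Y=\spec(O_{X,x_0})=\varprojlim U$ with affine transition morphisms and $i^{-1}\mathcal{F}$ is the colimit of the restrictions, and invoke the continuity theorem for \'etale cohomology (SGA~4, Exp.~VII, Th.~5.7), which is precisely the reference the paper uses.
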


\begin{proof}
Il suffit de constater que les morphismes \[ \varphi_U : \h^d(U,\mathcal{F}) \to \h^d(Y,i^{-1} \mathcal{F}) \] pour $U$ ouvert de $X$ contenant $x_0$ induisent un isomorphisme \[ \underset{\underset{x \in U \subseteq X}{\longrightarrow}}{\mathrm{lim}} \, \h^d(U,\mathcal{F}) \overset{\sim}{\to} \h^d(Y,i^{-1} \mathcal{F}) . \] Se restreignant aux ouverts $U$ qui sont affines, le sch\'ema $Y=\spec(O_{X,x_0})$ s'interpr\`ete comme la limite projective des $U$, les morphismes de transitions \'etant des morphismes affines. De plus on a une identification $i^{-1} \mathcal{F} = f_U^{-1} \mathcal{F}_{|U}$. Dans cette situation, \cite[Exp. VII, Th. 5.7.]{SGA4} nous assure que les $\varphi_U$ induisent l'isomorphisme voulu.
\end{proof}

Int\'eressons-nous maintenant plus sp\'ecifiquement aux complexes de faisceaux $\mathbb{Q}/\mathbb{Z}(j)$. Soit $0 \leqslant j \leqslant d$ un entier. On a
\begin{thm}[{Globalisation de \cite[Th. 4.1.]{Shiho}}]
\label{thm gersten}
Soit $X$ une $k$-vari\'et\'e lisse \'equidimensionnelle ou le spectre d'un anneau local en un point d'une $k$-vari\'et\'e lisse \'equidimensionnelle. Alors la conjecture de Gersten vaut pour $X$ et $\mathbb{Q}/\mathbb{Z}(j)$, c'est-\`a-dire qu'on a une suite exacte de faisceaux zariskiens sur $X$ : \begin{equation} 
\label{résolution gersten} 0 \to \mathcal{H}^d_{(X)}(\mathbb{Q}/\mathbb{Z}(j)) \to \bigoplus_{x \in X^{(0)}} (i_x)_\ast \h^d_x (X,\mathbb{Q}/\mathbb{Z}(j)) \to \bigoplus_{x \in X^{(1)}} (i_x)_\ast \, \h^{d+1}_x(X, \mathbb{Q}/\mathbb{Z}(j)) \to \cdots  .
\end{equation}
\end{thm}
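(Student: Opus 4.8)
Le plan est de ramener l'énoncé — qui affirme l'exactitude d'un complexe de faisceaux zariskiens sur $X$ — à ses fibres, puis d'identifier chaque fibre au complexe des sections globales du complexe de Gersten analogue sur le spectre d'un anneau local d'une $k$-variété lisse, où l'on dispose de \cite[Th.~4.1]{Shiho}. L'exactitude d'un complexe de faisceaux pour la topologie de Zariski se vérifiant sur les fibres, il suffit de montrer que pour tout point $x_0 \in X$ la fibre en $x_0$ de \eqref{résolution gersten} est un complexe exact de groupes abéliens. D'après le lemme~\ref{lemma fibres gersten}, cette fibre est le complexe des sections globales de $G^d(Y, i^{-1}\mathbb{Q}/\mathbb{Z}(j))$, où $Y = \spec(O_{X,x_0})$ et $i : Y \to X$ ; et, $Y$ étant le spectre d'un anneau local, son terme initial s'identifie à $\h^d(Y, i^{-1}\mathbb{Q}/\mathbb{Z}(j))$ (cf. la preuve du lemme~\ref{lemma fibres gersten}). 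Or $Y$ est à nouveau le spectre de l'anneau local en un point d'une $k$-variété lisse équidimensionnelle : lorsque $X$ est une telle variété c'est immédiat, et lorsque $X = \spec(O_{V,z})$ pour $V$ lisse équidimensionnelle on a $O_{X,x_0} = O_{V,z'}$ pour le point $z' \in V$ correspondant à $x_0$, la variété $V$ étant caténaire et équidimensionnelle. De plus, en voyant $Y$ comme la limite projective des voisinages ouverts affines de $x_0$ dans $X$ (les morphismes de transition étant affines), le lemme~\ref{lemme RW inverse} identifie $i^{-1}\mathbb{Q}/\mathbb{Z}(j)$ au complexe intrinsèque $\mathbb{Q}/\mathbb{Z}(j)_Y$. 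Il suffit donc d'établir l'énoncé \emph{local} suivant : pour $Y = \spec(R)$ avec $R$ l'anneau local en un point d'une $k$-variété lisse équidimensionnelle, le complexe de groupes abéliens \[ 0 \to \h^d(Y,\mathbb{Q}/\mathbb{Z}(j)) \to \bigoplus_{x \in Y^{(0)}} \h^d_x(Y,\mathbb{Q}/\mathbb{Z}(j)) \to \bigoplus_{x \in Y^{(1)}} \h^{d+1}_x(Y,\mathbb{Q}/\mathbb{Z}(j)) \to \cdots \] est exact.

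Cet énoncé local est précisément le contenu de \cite[Th.~4.1]{Shiho}. On peut aussi l'organiser comme suit : la décomposition $\mathbb{Q}/\mathbb{Z}(j) = \bigl(\bigoplus_{q \neq p}\mathbb{Q}_q/\mathbb{Z}_q(j)\bigr) \oplus \mathbb{Q}_p/\mathbb{Z}_p(j)$ scinde le complexe de Gersten tout entier en une somme directe indexée par les nombres premiers, et comme $Y$ est noethérien les foncteurs $\h^d(Y,-)$ et $\h^{d+r}_x(Y,-)$ commutent aux limites inductives filtrantes $\mathbb{Q}_q/\mathbb{Z}_q(j) = \varinjlim_n \mu_{q^n}^{\otimes j}$ et $\mathbb{Q}_p/\mathbb{Z}_p(j) = \varinjlim_n \nu_n^j[-j]$. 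On est alors ramené, d'une part, à la conjecture de Gersten pour les faisceaux étales finis localement constants $\mu_{q^n}^{\otimes j}$, $q \neq p$ — c'est le théorème de Bloch–Ogus–Gabber, valable sur un corps de base quelconque, dont \cite{CT-H-K} est une référence — et, d'autre part, à la conjecture de Gersten pour les faisceaux de de\thinspace Rham–Witt logarithmiques $\nu_n^j$, qui sur un corps éventuellement imparfait est \cite[Th.~4.1]{Shiho} lui-même (le cas d'un corps parfait étant dû à Gros et Suwa ; le passage au cas imparfait reposant, comme pour la proposition~\ref{suite dlog}, sur une approximation par des $\mathbb{F}_p$-algèbres lisses de type fini via le théorème de Popescu \cite[Th.~2.5]{Popescu2}).

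Le contenu mathématique véritable — le lemme de présentation géométrique de Gabber et la pureté absolue du côté premier à $p$, les calculs de de\thinspace Rham–Witt et l'approximation de Popescu du côté $p$-primaire — est entièrement importé des résultats cités : le seul travail à mener ici est la réduction au cas local. Son unique point délicat est de nature comptable : il faut faire coïncider avec soin les définitions sur le \emph{grand} site étale des complexes $\mathbb{Q}/\mathbb{Z}(j)$ et $\nu_n^j$, leur comportement par restriction le long des localisations (lemme~\ref{lemme RW inverse}), et le formalisme du coniveau et des complexes de Cousin (remarque~\ref{rem fonctarialite gersten}, lemmes~\ref{lemma fibres gersten} et \ref{passage limite}), de sorte que la fibre en $x_0$ du complexe de Gersten global soit exactement le complexe de Gersten des sections globales du schéma local $\spec(O_{X,x_0})$ muni de son $\mathbb{Q}/\mathbb{Z}(j)$ intrinsèque. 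Au-delà de ce point, je n'anticipe aucun obstacle sérieux.
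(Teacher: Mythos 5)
Votre démonstration est correcte et suit essentiellement la même stratégie que celle du papier : réduction aux fibres via le lemme~\ref{lemma fibres gersten}, identification de la fibre en $x_0$ avec le complexe des sections globales sur $\spec(O_{X,x_0})$, puis invocation de \cite[Prop. 2.1.2 \& Th. 2.2.1]{CT-H-K} pour la partie de torsion première à $p$ et de \cite[Th. 4.1]{Shiho} pour la partie $p$-primaire (le papier décompose $\mathbb{Q}/\mathbb{Z}(j)$ avant de passer aux fibres, vous après, ce qui est sans conséquence). Votre remarque supplémentaire sur l'identification de $i^{-1}\mathbb{Q}/\mathbb{Z}(j)$ avec $\mathbb{Q}/\mathbb{Z}(j)_Y$ via le lemme~\ref{lemme RW inverse} explicite un point laissé implicite dans le texte.
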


\begin{proof}
Pour $j \geqslant 1$, on d\'ecompose $\mathbb{Q}/{\mathbb{Z}}(j)$ en somme directe de la limite inductive des $\mu_{q^n}^{\otimes j}$ pour $q \neq p$ premier, et de la limite inductive des $\nu_n^j[-j]$, $n \in \mathbb{N}$. Pour tout $x_0 \in X$, on note $Y_{x_0}$ le $k$-sch\'ema $\spec(O_{X,x_0})$.

$\diamond$ Pour tout $x_0 \in X$, le fait que le complexe des sections globales de $G^d(Y_{x_0},\mu_{q^n}^{\otimes j})$ soit exact est justifi\'e par \cite[Prop. 2.1.2. \& Th. 2.2.1.]{CT-H-K} : on a une suite exacte \[ 0 \to  \h^{d}(Y_{x_0}, \mu_{q^n}^{\otimes j}) \to \bigoplus_{x \in Y_{x_0}^{(0)}} \h_x^{d}(Y_{x_0},\mu_{q^n}^{\otimes j}) \to \bigoplus_{x \in Y_{x_0}^{(1)}} \h^{d+1}_x (Y_{x_0},\mu_{q^n}^{\otimes j}) \to \cdots .\] D\`es lors, gr\^ace au lemme \ref{lemma fibres gersten}, on conclut que $G^d(X,\mu_{q^n}^{\otimes j})$ est une suite exacte de faisceaux.

$\diamond$ On proc\`ede de m\^eme pour le cas avec les $\nu_n^j$. Pour tout $x_0 \in X$, le th\'eor\`eme \cite[Th. 4.1.]{Shiho} \'etablit que la suite de groupes suivante est exacte : \[ 0 \to  \h^{d-j}(Y_{x_0}, \nu_n^j) \to \bigoplus_{x \in Y_{x_0}^{(0)}} \h_x^{d-j}(Y_{x_0},\nu_n^j) \to \bigoplus_{x \in Y_{x_0}^{(1)}} \h^{d-j+1}_x (Y_{x_0},\nu_n^j) \to \cdots . \] D'apr\`es le lemme \ref{lemma fibres gersten}, cela signifie que les fibres en tout $x_0 \in X$ du complexe $G^{d-j}(X,\nu_n^j)$ sont exactes. On en d\'eduit que le complexe global est exact lui-aussi.
\end{proof}

\begin{rem}
Kahn montre que la conjecture de Gersten vaut pour les sch\'emas r\'eguliers de type fini sur un corps (\cite[Prop. A.4.]{Kahn_classes}). On pourrait donc justifier le th\'eor\`eme \ref{thm gersten} pour les anneaux locaux \`a partir du cas global.
\end{rem}

\begin{csq}
\label{csq gersten}
Soit $X$ une $k$-vari\'et\'e lisse et irr\'eductible ou le spectre d'un anneau local d'une $k$-vari\'et\'e lisse et irr\'eductible. On a une suite exacte de groupes \[ 0 \to  \h^0_\mathrm{Zar} \left( X,\mathcal{H}^d_{(X)}(\mathbb{Q}/\mathbb{Z}(j)) \right) \to \h^d (K(X),\mathbb{Q}/\mathbb{Z}(j)) \to \bigoplus_{x \in X^{(1)}} \h^{d+1}_x (X,\mathbb{Q}/\mathbb{Z}(j)) \] avec $K(X)$ le corps de fonctions de $X$ et une suite exacte pour tout $x_0 \in X$, \[ 0 \to  \h^d(O_{X,x_0}, \mathbb{Q}/\mathbb{Z}(j)) \to \h^d (K(X),\mathbb{Q}/\mathbb{Z}(j)) \to \bigoplus_{\underset{x_0 \in \overline{\{ x \}}}{x \in X^{(1)}}} \h^{d+1}_x (X,\mathbb{Q}/\mathbb{Z}(j)) . \]
\end{csq}

La premi\`ere suite s'obtient en prenant les sections globales dans le th\'eor\`eme \ref{thm gersten}, et la deuxi\`eme suite est \cite[Th. 4.1.]{Shiho}.

\vspace{\baselineskip}

Dans les hypoth\`eses de \ref{csq gersten}, on a un diagramme commutatif \`a lignes exactes, naturel en $(X,x_0)$, avec $x_0 \in X$ : \begin{equation}
\xymatrix{
0 \ar[r] &  \h^0_\mathrm{Zar} \left( X,\mathcal{H}^d_{(X)}(\mathbb{Q}/\mathbb{Z}(j)) \right) \ar[r] \ar[d] & \h^d (K(X),\mathbb{Q}/\mathbb{Z}(j)) \ar[r] \ar[d]^{=} &  \bigoplus_{x \in X^{(1)}} \h^{d+1}_{x} (X,\mathbb{Q}/\mathbb{Z}(j)) \ar[d]^{\text{projection}} \\
 0 \ar[r] &  \h^d ( O_{X,x_0},\mathbb{Q}/\mathbb{Z}(j) ) \ar[r] & \h^d (K(X),\mathbb{Q}/\mathbb{Z}(j)) \ar[r] & \bigoplus_{\underset{x_0 \in \overline{\{ x \}}}{x \in X^{(1)}}} \h^{d+1}_{x} (X,\mathbb{Q}/\mathbb{Z}(j))
},
\end{equation} duquel on d\'eduit : 
\begin{corol}
\label{corollaire points codimension 1}
Dans le groupe $\h^d (K(X),\mathbb{Q}/\mathbb{Z}(j))$, on a  l'\'egalit\'e entre sous-groupes \[ \h^0_\mathrm{Zar} \left( X,\mathcal{H}^d_{(X)}(\mathbb{Q}/\mathbb{Z}(j)) \right) = \bigcap_{x \in X^{(1)}} \h^d ( O_{X,x},\mathbb{Q}/\mathbb{Z}(j) ) . \]
\end{corol}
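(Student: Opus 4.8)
The plan is to deduce the equality of subgroups of $\h^d(K(X),\mathbb{Q}/\mathbb{Z}(j))$ directly from the two exact sequences of Consequence~\ref{csq gersten} and the commutative diagram displayed just above the statement. Write $r\colon \h^d(K(X),\mathbb{Q}/\mathbb{Z}(j)) \to \bigoplus_{x \in X^{(1)}} \h^{d+1}_x(X,\mathbb{Q}/\mathbb{Z}(j))$ for the residue map occurring in the first of those sequences and $r_x$ for its $x$-th component; exactness of that sequence identifies $\h^0_\mathrm{Zar}(X,\mathcal{H}^d_{(X)}(\mathbb{Q}/\mathbb{Z}(j)))$ with $\ker r$ inside $\h^d(K(X),\mathbb{Q}/\mathbb{Z}(j))$. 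Likewise, each $\h^d(O_{X,x_0},\mathbb{Q}/\mathbb{Z}(j))$ is a subgroup of $\h^d(K(X),\mathbb{Q}/\mathbb{Z}(j))$ via the second sequence, and since the middle vertical arrow of the diagram is the identity these identifications are mutually compatible.

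For the inclusion $\subseteq$ I would use only the left-hand square of the diagram: it shows, for every $x_0 \in X$ and in particular for every $x_0 \in X^{(1)}$, that $\h^0_\mathrm{Zar}(X,\mathcal{H}^d_{(X)}(\mathbb{Q}/\mathbb{Z}(j)))$ is contained in $\h^d(O_{X,x_0},\mathbb{Q}/\mathbb{Z}(j))$ as subgroups of $\h^d(K(X),\mathbb{Q}/\mathbb{Z}(j))$; intersecting over $x_0 \in X^{(1)}$ yields one containment. For the reverse inclusion, I would take $\alpha$ in the intersection and, using $\ker r = \h^0_\mathrm{Zar}(X,\mathcal{H}^d_{(X)}(\mathbb{Q}/\mathbb{Z}(j)))$, reduce to showing $r_x(\alpha) = 0$ for every $x \in X^{(1)}$. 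Fixing such an $x$ and applying the second exact sequence of \ref{csq gersten} with $x_0 = x$, the index set $\{x' \in X^{(1)} : x \in \overline{\{x'\}}\}$ collapses to $\{x\}$ (a codimension-one point specialising to another codimension-one point equals it), so that sequence becomes
\[ 0 \to \h^d(O_{X,x},\mathbb{Q}/\mathbb{Z}(j)) \to \h^d(K(X),\mathbb{Q}/\mathbb{Z}(j)) \overset{r_x}{\to} \h^{d+1}_x(X,\mathbb{Q}/\mathbb{Z}(j)), \]
the last arrow being $r_x$ by commutativity of the right-hand square. Exactness gives $\h^d(O_{X,x},\mathbb{Q}/\mathbb{Z}(j)) = \ker r_x$, and since $\alpha$ lies in the left-hand group we get $r_x(\alpha) = 0$, as required.

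In essence this is a diagram chase, so no serious obstacle is expected. The only point deserving care is the collapse of $\{x' \in X^{(1)} : x \in \overline{\{x'\}}\}$ to a single point when $x \in X^{(1)}$: this is where equidimensionality enters, and it is exactly what lets the second sequence of \ref{csq gersten} isolate the single residue coordinate $r_x$. One should also note that the agreement of the residue map of $\spec(O_{X,x})$ with the $x$-component of the global one is already built into the displayed diagram, through the functoriality of the Gersten complex (Remarque~\ref{rem fonctarialite gersten}) and le lemme~\ref{lemma fibres gersten}.
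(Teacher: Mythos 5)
Your argument is correct and is exactly the diagram chase the paper intends: the first exact sequence of la Conséquence \ref{csq gersten} identifies $\h^0_\mathrm{Zar}(X,\mathcal{H}^d_{(X)}(\mathbb{Q}/\mathbb{Z}(j)))$ with $\bigcap_{x\in X^{(1)}}\ker r_x$, and the second, applied at $x_0=x\in X^{(1)}$ where the index set collapses to $\{x\}$, identifies $\h^d(O_{X,x},\mathbb{Q}/\mathbb{Z}(j))$ with $\ker r_x$. The paper leaves all of this implicit ("duquel on déduit"), so your write-up simply makes the same deduction explicit.
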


		\section{Groupe de Brauer (cohomologique)}
		\label{sous section groupe de brauer}

Ici on veut \'etablir clairement un isomorphisme naturel $\h^2(X,\mathbb{Q}/\mathbb{Z}(1)) \overset{\sim}{\to} \h^2(X,\mathbb{G}_m)$ pour certains sch\'emas $X$. La notation $\br^\prime(X)$ d\'esigne le groupe de Brauer cohomologique de $X$, c'est-\`a-dire le sous-groupe de torsion de $\h^2(X,\mathbb{G}_m)$, qui est $\h^2(X,\mathbb{G}_m)$ tout entier lorsque $X$ est irr\'eductible et r\'egulier noeth\'erien (\cite[Cor. 1.8.]{Grothendieck_brauerii}).

Soit $X$ un $\mathbb{F}_p$-sch\'ema r\'egulier avec $p$ un nombre premier. Les suites exactes \eqref{suite exacte logarithmique} fournissent des suites exactes en cohomologie \'etale \[ \h^1(X,\mathbb{G}_m) \overset{p^n}{\to} \h^1(X,\mathbb{G}_m) \to \h^1(X,\nu_n^1) \to \h^2(X,\mathbb{G}_m) \overset{p^n}{\to} \h^2(X,\mathbb{G}_m) \] pour tout entier $n \geqslant 1$, d'o\`u des suites exactes \[ 0 \to \pic(X)/p^n \pic(X) \to \h^2(X,\mathbb{Z}/{p^n\mathbb{Z}}(1)) \to \prescript{}{p^n}{\br^\prime(X)} \to 0 \] o\`u $\prescript{}{p^n}{\br^\prime(X)}$ d\'esigne les \'elements de $\br(X)$ qui s'annulent \`a la puissance $p^n$. Ces suites fournissent des morphismes naturels \[ \h^2(X,\mathbb{Z}/{p^n\mathbb{Z}}(1)) \to \prescript{}{p^n}{\br^\prime(X)} \] qui sont des isomorphismes quand $X$ est le spectre d'un anneau local, ou encore un morphisme naturel \begin{equation} \label{morphisme p brauer} \h^2(X,\mathbb{Q}_p/{\mathbb{Z}_p}(1)) \to \br^\prime(X) \lbrace p \rbrace 
\end{equation} qui est un isomorphisme quand $X$ est le spectre d'un anneau local. Pr\'ecisons que $\br^\prime(X)\lbrace p \rbrace$ est le sous-groupe de $\br^\prime(X)$ constitu\'e des \'el\'ements d'ordre une puissance de $p$. On compl\`ete \ref{morphisme p brauer} avec les parties de torsion premi\`ere \`a $p$ : \begin{equation}
\label{morphisme brauer}
\xymatrix{
\left( \underset{q \neq p \text{ premier}}{\bigoplus} \h^2(X,\mathbb{Q}_q/{\mathbb{Z}_q}(1)) \right) \oplus \h^2(X,\mathbb{Q}_p/{\mathbb{Z}_p}(1) ) \ar[r] \ar[d]^{=} & \left( \underset{q \neq p \text{ premier}}{\bigoplus} \br^\prime(X)\lbrace q \rbrace \right) \oplus \br^\prime(X)\lbrace p \rbrace \ar[d]^{=} \\
\h^2(X,\mathbb{Q}/{\mathbb{Z}}(1)) \ar@{-->}[r] &  \br^\prime(X)
}
\end{equation} pour obtenir un morphisme \emph{naturel}, qui est un isomorphisme sur la partie de torsion premi\`ere \`a $p$, et qui est un isomorphisme pour le spectre d'un anneau local.


\begin{thm}
\label{thm isom brauer}
On a un isomorphisme naturel en tout $\mathbb{F}_p$-sch\'ema irr\'eductible $X$ qui est une vari\'et\'e lisse sur un corps de caract\'eristique $p$ ou le spectre d'un anneau local d'une telle vari\'et\'e  : \[ \h^0_\mathrm{Zar} \left( X,\mathcal{H}^2_{(X)}(\mathbb{Q}/\mathbb{Z}(1)) \right) \overset{\sim}{\longrightarrow} \br(X). \]
\end{thm}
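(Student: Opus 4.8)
The plan is to deduce the theorem from two independent facts: that the natural map \eqref{morphisme brauer} becomes an \emph{isomorphism of Zariski sheaves} after sheafification, and that, on a regular integral scheme, $\h^2$ of $\mathbb{G}_m$ is recovered from the Leray spectral sequence for the projection $X_{\text{\'et}}\to X_{\mathrm{Zar}}$ because $\mathbb{G}_m$ admits there a flasque resolution of length one.

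First I would turn \eqref{morphisme brauer} into a statement about Zariski sheaves. It is natural in the scheme, so applying it to every Zariski-open $U\subseteq X$ and sheafifying yields a morphism of Zariski sheaves on $X$
\[ \mathcal{H}^2_{(X)}(\mathbb{Q}/\mathbb{Z}(1)) \longrightarrow R^2\epsilon_\ast\mathbb{G}_m, \]
where $\epsilon\colon X_{\text{\'et}}\to X_{\mathrm{Zar}}$ is the canonical morphism of sites and $R^2\epsilon_\ast\mathbb{G}_m$ is the sheafification of $U\mapsto\h^2(U,\mathbb{G}_m)$ (the $\mathbb{G}_m$-analogue of $\mathcal{H}^2_{(X)}$). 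Its stalk at a point $x$ is the map $\h^2(O_{X,x},\mathbb{Q}/\mathbb{Z}(1))\to\h^2(O_{X,x},\mathbb{G}_m)$: for the source this identification of the stalk is exactly the computation already made in the proof of Lemma \ref{lemma fibres gersten} (via Lemma \ref{lemme RW inverse}), for the target it is Lemma \ref{passage limite}, and by naturality the induced map is again \eqref{morphisme brauer}, this time for the spectrum of the local ring $O_{X,x}$. Since \eqref{morphisme brauer} is an isomorphism for spectra of local rings, this morphism is an isomorphism on all stalks, hence an isomorphism of Zariski sheaves; taking global sections gives a natural isomorphism $\h^0_\mathrm{Zar}(X,\mathcal{H}^2_{(X)}(\mathbb{Q}/\mathbb{Z}(1)))\overset{\sim}{\to}\h^0_\mathrm{Zar}(X,R^2\epsilon_\ast\mathbb{G}_m)$.

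Next I would identify $\h^0_\mathrm{Zar}(X,R^2\epsilon_\ast\mathbb{G}_m)$ with $\br(X)$ through the Leray spectral sequence $E_2^{p,q}=\h^p_\mathrm{Zar}(X,R^q\epsilon_\ast\mathbb{G}_m)\Rightarrow\h^{p+q}(X_{\text{\'et}},\mathbb{G}_m)$. Two observations make it degenerate in the needed range. On the one hand $R^1\epsilon_\ast\mathbb{G}_m=0$, its stalk at $x$ being $\varinjlim_{U\ni x}\pic(U)=\pic(O_{X,x})=0$. On the other hand $\h^q_\mathrm{Zar}(X,\mathbb{G}_m)=0$ for $q\geqslant 2$: indeed $X$ is integral and regular (a smooth $k$-variety, or the spectrum of one of its local rings, is regular integral noetherian), so the classical divisor sequence
\[ 0\to\mathbb{G}_m\to(i_\eta)_\ast\kappa(\eta)^\times\to\bigoplus_{x\in X^{(1)}}(i_x)_\ast\mathbb{Z}\to 0 \]
is exact — injectivity and exactness in the middle because $O_X(U)^\times=\bigcap_{x\in U^{(1)}}O_{X,x}^\times$ inside $\kappa(\eta)^\times$ on a normal noetherian scheme, surjectivity because on a regular scheme every Weil divisor is Zariski-locally principal — and it is a flasque resolution of length one, since constant sheaves supported on irreducible closed subsets of the noetherian space $X$ are flasque. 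Hence $E_2^{p,1}=0$ for all $p$ and $E_2^{p,0}=0$ for $p\geqslant 2$, so $E_\infty^{2,0}=E_\infty^{1,1}=0$ and the differentials out of $E_2^{0,2}$ vanish (they land in $E_2^{2,1}=0$ and in a subquotient of $E_2^{3,0}=0$); therefore the edge homomorphism $\h^2(X_{\text{\'et}},\mathbb{G}_m)\overset{\sim}{\to}E_\infty^{0,2}=\h^0_\mathrm{Zar}(X,R^2\epsilon_\ast\mathbb{G}_m)$ is an isomorphism. Finally $\h^2(X_{\text{\'et}},\mathbb{G}_m)=\br^\prime(X)$ because $X$ is irreducible regular noetherian (\cite[Cor. 1.8.]{Grothendieck_brauerii}), and $\br^\prime(X)=\br(X)$ since $X$ is affine, or more generally carries an ample invertible sheaf. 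Composing the three isomorphisms yields the asserted natural isomorphism.

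As a consistency check one can route the argument through Corollary \ref{corollaire points codimension 1}: with $d=2$, $j=1$ it gives $\h^0_\mathrm{Zar}(X,\mathcal{H}^2_{(X)}(\mathbb{Q}/\mathbb{Z}(1)))=\bigcap_{x\in X^{(1)}}\h^2(O_{X,x},\mathbb{Q}/\mathbb{Z}(1))$ inside $\h^2(K(X),\mathbb{Q}/\mathbb{Z}(1))$, which \eqref{morphisme brauer} (an isomorphism for $K(X)$ and for each $O_{X,x}$, compatibly with restriction to the generic point) rewrites as $\bigcap_{x\in X^{(1)}}\br^\prime(O_{X,x})\subseteq\br^\prime(K(X))$; one then needs $\br^\prime(X)=\bigcap_{x\in X^{(1)}}\br^\prime(O_{X,x})$, that is, purity for the Brauer group of a regular scheme in codimension one, which is precisely what the spectral sequence computation above establishes. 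I expect this last point — controlling $\h^2$ of $\mathbb{G}_m$ by its local values in codimension one — to be the only genuine obstacle; the sheafification step, the stalk identifications (Lemmas \ref{passage limite} and \ref{lemme RW inverse}), and the vanishing $R^1\epsilon_\ast\mathbb{G}_m=0$ are routine.
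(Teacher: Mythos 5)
Your main argument is correct, but it takes a genuinely different route from the paper's. The paper works directly with global sections: it shows that $U \mapsto \br^\prime(U)$ is already a Zariski sheaf (lemme \ref{lemme faisceau brauer}), identifies $\h^0_\mathrm{Zar}\left(X,\mathcal{H}^2_{(X)}(\mathbb{Q}/\mathbb{Z}(1))\right)$ with $\bigcap_{x \in X^{(1)}} \h^2(O_{X,x},\mathbb{Q}/\mathbb{Z}(1))$ inside $\h^2(K(X),\mathbb{Q}/\mathbb{Z}(1))$ via le corollaire \ref{corollaire points codimension 1}, and then invokes purity for the Brauer group of a regular scheme, $\br^\prime(X)=\bigcap_{x \in X^{(1)}}\br^\prime(O_{X,x})$, quoted from \cite[Th. 1.2]{Cesnavicius}. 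You instead sheafify \eqref{morphisme brauer}, check that it is an isomorphism on stalks (which is exactly the local-ring case of \eqref{morphisme brauer}, already established in the paper, combined with the stalk identifications of the lemmas \ref{passage limite} and \ref{lemme RW inverse}), and recover $\h^2(X,\mathbb{G}_m)$ from $\h^0_\mathrm{Zar}(X,R^2\epsilon_\ast\mathbb{G}_m)$ by the Leray spectral sequence for $X_{\text{\'et}} \to X_{\mathrm{Zar}}$, using $R^1\epsilon_\ast\mathbb{G}_m=0$ and the flasque divisor resolution of $\mathbb{G}_m$ on a regular integral noetherian scheme. This is the classical degeneration argument of Grothendieck (Brauer II), and it buys something real: it bypasses the codimension-one purity theorem entirely, replacing a deep input by elementary sheaf theory; the paper's route, by contrast, needs no spectral sequence and directly reuses the Gersten machinery it has just set up.

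Two caveats. First, the closing \og consistency check \fg{} is wrong on one point: the Leray computation gives $\br^\prime(X)=\h^0_\mathrm{Zar}(X,R^2\epsilon_\ast\mathbb{G}_m)=\bigcap_{x \in X}\br^\prime(O_{X,x})$, an intersection over \emph{all} points of $X$; it does not establish $\br^\prime(X)=\bigcap_{x \in X^{(1)}}\br^\prime(O_{X,x})$. Passing from all points to codimension-one points is precisely the content of purity, and that alternative route would still require \cite{Cesnavicius} (at least for the $p$-primary part), exactly as in the paper. Since your main line never uses this, the proof stands, but the claim that the spectral sequence \og establishes \fg{} purity should be removed. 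Second, your justification of $\br^\prime(X)=\br(X)$ (affineness, or an ample invertible sheaf) does not apply to an arbitrary smooth $k$-variety; this is however a notational issue shared with the paper, whose own proof likewise only produces an isomorphism onto $\br^\prime(X)$.
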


On montre d'abord le lemme suivant avant de prouver le th\'eor\`eme \ref{thm isom brauer}.

\begin{lemma}
\label{lemme faisceau brauer}
Soit $X$ un sch\'ema noeth\'erien, irr\'eductible et r\'egulier. Alors le pr\'efaisceau $U \mapsto \br^\prime(U) (=\h^2(U,\mathbb{G}_m))$ sur $X$ pour la topologie de Zariski est un faisceau.
\end{lemma}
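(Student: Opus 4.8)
The plan is to show that the Zariski presheaf $U \mapsto \h^2(U,\mathbb{G}_m)$ on $X$ satisfies the sheaf axiom, which since $X$ is irreducible reduces to checking that for any Zariski open cover $\{U_i\}$ of an open $U \subseteq X$, the sequence $0 \to \h^2(U,\mathbb{G}_m) \to \prod_i \h^2(U_i,\mathbb{G}_m) \to \prod_{i,j} \h^2(U_i \cap U_j,\mathbb{G}_m)$ is exact. The key input is that on a noetherian, irreducible, regular scheme the group $\h^2(\ast,\mathbb{G}_m)$ equals the cohomological Brauer group $\br'(\ast)$ (by \cite[Cor. 1.8.]{Grothendieck_brauerii}, already recalled in the excerpt), which is a \emph{torsion} group and, crucially, injects into the Brauer group of the generic point: for any nonempty open $V$ of $X$ the restriction $\br'(V) \hookrightarrow \br(K(X))$ is injective, since $V$ is regular (this is the classical purity/injectivity statement for Brauer groups of regular schemes, a consequence of \cite[Cor. 1.8.]{Grothendieck_brauerii} together with injectivity into the function field, which one may cite directly).

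First I would record that, because every nonempty open of $X$ is again noetherian, irreducible and regular with the same function field $K(X)$, all the groups $\h^2(U_i,\mathbb{G}_m)$, $\h^2(U_i \cap U_j,\mathbb{G}_m)$ and $\h^2(U,\mathbb{G}_m)$ embed compatibly into the single group $\br(K(X))$, the embeddings being induced by the generic-point restriction maps. This immediately gives injectivity of $\h^2(U,\mathbb{G}_m) \to \prod_i \h^2(U_i,\mathbb{G}_m)$: an element dying in every $\h^2(U_i,\mathbb{G}_m)$ already dies in $\br(K(X))$, hence is zero. For exactness in the middle, suppose $(\alpha_i)_i \in \prod_i \h^2(U_i,\mathbb{G}_m)$ has matching restrictions on the overlaps. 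Viewing everything inside $\br(K(X))$, the overlap condition forces all the images of the $\alpha_i$ in $\br(K(X))$ to coincide; call this common class $\alpha \in \br(K(X))$. It then remains to see that $\alpha$ actually lies in the subgroup $\h^2(U,\mathbb{G}_m) \subseteq \br(K(X))$, i.e. that $\alpha$ is ``unramified'' along $U$, and that its restriction to each $U_i$ recovers $\alpha_i$; the latter is automatic once $\alpha \in \h^2(U,\mathbb{G}_m)$, again by injectivity into $\br(K(X))$.

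The main obstacle is precisely this last point: producing a class on $U$ out of compatible local classes. Here I would use the Gersten/purity description of $\h^2(V,\mathbb{G}_m)$ for $V$ regular integral, namely that $\br'(V)$ is exactly the set of classes $\alpha \in \br(K(X))$ unramified at every codimension-one point of $V$ — equivalently $\h^2(V,\mathbb{G}_m) = \bigcap_{x \in V^{(1)}} \br(O_{X,x})$ inside $\br(K(X))$, in the spirit of Corollary \ref{corollaire points codimension 1} (the $\mathbb{G}_m$ analogue of the $\mathbb{Q}/\mathbb{Z}(1)$ statement, classically due to Auslander--Goldman/Grothendieck for regular schemes). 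Granting this characterization, the verification is formal: since $U = \bigcup_i U_i$, one has $U^{(1)} = \bigcup_i U_i^{(1)}$, so a class $\alpha \in \br(K(X))$ that is unramified on each $U_i$ is unramified on all of $U$, hence $\alpha \in \h^2(U,\mathbb{G}_m)$. This closes the argument. I would phrase the proof so as to invoke this codimension-one characterization of $\br'$ for regular integral noetherian schemes as a known fact (citing \cite{Grothendieck_brauerii} and standard purity), rather than reproving it.
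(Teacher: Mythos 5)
Your argument is correct, but it takes a genuinely different route from the paper on the gluing step. For injectivity both proofs use the same input, namely the injection $\h^2(V,\mathbb{G}_m)\hookrightarrow\h^2(K(X),\mathbb{G}_m)$ for $V$ nonempty open (régulier, intègre, noethérien), from \cite{Grothendieck_brauerii}. For the middle exactness, the paper stays elementary: it extracts a finite subcover (noetherianity) and glues the classes two at a time using the Mayer--Vietoris sequence $\h^2(U_1\cup U_2,\mathbb{G}_m)\to\h^2(U_1,\mathbb{G}_m)\oplus\h^2(U_2,\mathbb{G}_m)\to\h^2(U_1\cap U_2,\mathbb{G}_m)$, the injectivity into $\br(K(X))$ guaranteeing that the successive glued classes are compatible. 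You instead identify every $\h^2(V,\mathbb{G}_m)$ with $\bigcap_{x\in V^{(1)}}\br^\prime(O_{X,x})$ inside $\br(K(X))$ and observe that $U^{(1)}=\bigcup_i U_i^{(1)}$; this is clean and avoids both the finite subcover and the induction, but it rests on a much heavier input. Be careful with the attribution: the equality $\br^\prime(V)=\bigcap_{x\in V^{(1)}}\br^\prime(O_{X,x})$ for a regular integral noetherian scheme is \emph{not} classical in the generality needed here --- for the $p$-primary torsion in characteristic $p$ and in dimension $>2$ it is the purity theorem of \v{C}esnavi\v{c}ius (\cite[Th. 1.2]{Cesnavicius}), which the paper only invokes later, in the proof of the theorem comparing $\h^0_\mathrm{Zar}(X,\mathcal{H}^2_{(X)}(\mathbb{Q}/\mathbb{Z}(1)))$ with $\br(X)$; the Auslander--Goldman/Grothendieck statement covers only low dimension or prime-to-$p$ torsion. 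Since that reference is used elsewhere in the paper anyway, your proof is admissible, but the paper's Mayer--Vietoris argument buys a self-contained lemma that does not depend on purity.
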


\begin{proof}
Soit $U$ un ouvert non vide de $X$ et soit $(U_i)_{i \in I}$ un recouvrement ouvert de $U$. On veut montrer l'exactitude de la suite \begin{equation}
\label{suite faisceau} 0 \to \h^2(U,\mathbb{G}_m) \to \prod_{i \in I} \h^2(U_i,\mathbb{G}_m)  \rightrightarrows \prod_{i,j \in I} \h^2(U_i \cap U_j,\mathbb{G}_m) .
\end{equation}  L'ouvert $U$ \'etant non vide, $I$ est non vide et il existe $i_0 \in I$. D'apr\`es \cite[1.6.]{Grothendieck_brauerii}, les morphismes $\h^2(U,\mathbb{G}_m) \to \h^2(k(U),\mathbb{G}_m)$ et $\h^2(U_{i_0},\mathbb{G}_m) \to \h^2(k(U),\mathbb{G}_m)$ sont injectifs. Il faut donc que $\h^2(U,\mathbb{G}_m) \to \h^2(U_{i_0},\mathbb{G}_m)$ soit injectif, montrant par l\`a qu'on a bien l'injectivit\'e dans la suite \eqref{suite faisceau}. Ensuite, soit $(\alpha_i)_i \in \prod_{i \in I} \h^2(U_i,\mathbb{G}_m)$ tel que \[ \forall i, \, j \in I, \: (\alpha_i)_{|U_i \cap U_j} = (\alpha_j)_{|U_i \cap U_j} \in \h^2(U_i \cap U_j,\mathbb{G}_m). \] On consid\`ere un sous-ensemble fini $I^\prime \subseteq I$ pour lequel $(U_i)_{i \in I^\prime}$ reste un recouvrement de $U$. Cela est possible puisque $X$, donc $U$, est noeth\'erien. On veut montrer que les $\alpha_i$, $i \in I^\prime$, proviennent d'une m\^eme classe de $ \h^2(U,\mathbb{G}_m)$. Exprimons $I^\prime = \lbrace 1,\cdots,r \rbrace$ pour un entier $r \geqslant 1$. Pour $U_{1,2} := U_1 \cup U_2$, on a la suite exacte de Mayers-Vietoris (\cite[Th. 10.8.]{Milne_LectureE})) : \[  \h^2(U_{1,2},\mathbb{G}_m) \to  \h^2(U_1,\mathbb{G}_m) \oplus  \h^2(U_2,\mathbb{G}_m) \to  \h^2(U_1 \cap U_2,\mathbb{G}_m) . \] Par hypoth\`ese sur les $\alpha_i$, $i \in I$, les \'el\'ements $\alpha_1$ et $\alpha_2$ ont m\^eme image dans $\h^2(U_1 \cap U_2,\mathbb{G}_m) $, donc proviennent d'une classe $\alpha_{1,2} \in \h^2(U_{1,2},\mathbb{G}_m)$. En proc\'edant de m\^eme avec $\alpha_{1,2}$ et $\alpha_3$ \`a la place de $\alpha_1$ et $\alpha_2$, et ainsi de suite, on trouve qu'effectivement $\alpha_1$, $\cdots$, $\alpha_r$ proviennent d'un m\^eme \'el\'ement de $\h^2(U,\mathbb{G}_m)$.
\end{proof}

\begin{proof}[D\'emonstration du th\'eor\`eme \ref{thm isom brauer}]
Appelons $\varphi_Y$ le morphisme \ref{morphisme brauer} pour un sch\'ema $Y/\mathbb{F}_p$ r\'egulier irr\'eductible. 

Soit $X$ comme dans le th\'eor\`eme. Du fait que le groupe de Brauer est un faisceau sur $X$ pour la topologie de Zariski (lemme \ref{lemme faisceau brauer}), les $\varphi_U$, pour $U$ ouverts de $X$, induisent un morphisme de faisceaux zariskiens $\phi : \mathcal{H}^2_{(X)}(\mathbb{Q}/\mathbb{Z}(1)) \to \br^\prime$.

Pour tout $x \in X$, on a un diagramme commutatif \begin{equation}
\xymatrix{
\h^0_\mathrm{Zar} \left( X,\mathcal{H}_{(X)}^2(\mathbb{Q}/\mathbb{Z}(1)) \right) \ar[r]^-{\phi(X)} \ar[d] & \br^\prime(X) \ar[d] \\
\h^2 ( O_{X,x},\mathbb{Q}/\mathbb{Z}(1)) \ar[r]^-{\varphi_{O_{X,x}}}_-{\simeq} \ar[d] & \br^\prime(O_{X,x})\ar[d] \\
\h^2 ( K(X),\mathbb{Q}/\mathbb{Z}(1)) \ar[r]^-{\varphi_{K(X)}}_-{\simeq} & \br^\prime(K(X))
}.
\end{equation} De ce diagramme, gr\^ace au corollaire \ref{corollaire points codimension 1} et du fait que \[ \br^\prime(X)= \bigcap_{x \in X^{(1)}} \br^\prime(O_{X,x}) \subset \br^\prime(K(X))\] (d'apr\`es \cite[Th. 1.2]{Cesnavicius}), on tire que $\phi(X)$ est un isomorphisme, qui est naturel en $X$.
\end{proof}

\bibliographystyle{alpha-fr}
\bibliography{/Users/alexlourdeaux/Documents/Mathematiques/Perso/Bibliographie.bib}

\end{document}